\DeclareMathAlphabet{\mathscr}{OT1}{pzc}{m}{it} % for cursive
\g@addto@macro\normalsize{%
  \setlength\abovedisplayskip{4pt}
  \setlength\belowdisplayskip{4pt}
  \setlength\abovedisplayshortskip{4pt}
  \setlength\belowdisplayshortskip{4pt}
}
\newcommand{\lqedhere}{\ensuremath{\text{\qedhere\hspace{-1em}}}}
    \def\MR#1{}
\newtheorem{theorem}{Theorem}[section]
\newtheorem{lemma}[theorem]{Lemma}
\newtheorem{prop}[theorem]{Proposition}
\newtheorem{cor}[theorem]{Corollary}
\theoremstyle{definition}
\newenvironment{example}
  {\pushQED{\qed}\examplex}
  {\popQED\endexamplex}
\theoremstyle{remark}
\newtheorem{remark}[theorem]{Remark}
\newcommand{\isom}{\cong}
\newcommand{\Res}{\operatorname{Res}}
\newcommand{\Id}{\text{Id}}
\newcommand{\Gr}{\operatorname{Gr}}
\newcommand{\SL}{\operatorname{SL}}
\newcommand{\SO}{\operatorname{SO}}
\newcommand{\GL}{\operatorname{GL}}
\newcommand{\V}{{\mathcal V}}
\newcommand{\PP}{\mathbb{P}}
\newcommand{\CC}{\mathbb{C}}
\newcommand{\ZZ}{\mathbb{Z}}
\newcommand{\Chow}{\operatorname{Chow}}
\newcommand{\Seg}{\operatorname{Seg}}
\newcommand{\Sing}{\operatorname{Sing}}
\def\bw#1{{\textstyle\bigwedge^{\hspace{-.2em}#1}}}
\def\phi{ \varphi }
\def \a{\alpha}
\def \g{\mathfrak{g}}
\def \e{\mathfrak{e}}
\def \sl{\mathfrak{sl}}
\def \so{\mathfrak{so}}
\def \dto{\dashrightarrow}
\DeclareMathOperator{\tr}{Tr}
\begin{document}
\date{\today}

\author{Fr\'ed\'eric Holweck}\email{frederic.holweck@utbm.fr}
\address{Laboratoire Interdisciplinaire Carnot de Bourgogne, ICB/UTBM, UMR 6303 CNRS,
Universit\'e Bourgogne Franche-Comt\'e, 90010 Belfort Cedex, France
}

\author{Luke Oeding}\email{oeding@auburn.edu}
\address{Department of Mathematics and Statistics,
Auburn University,
Auburn, AL, USA
}
\address{Laboratoire Interdisciplinaire Carnot de Bourgogne, ICB/UTBM, UMR 6303 CNRS,
Universit\'e Bourgogne Franche-Comt\'e, 90010 Belfort Cedex, France
}
\address{Institute of Mathematics, Polish Academy of Sciences (IM PAN),
Warsaw, Poland
}

\title{Hyperdeterminants from the $\text{E}_8$ Discriminant}
\begin{abstract}
We find expressions of the polynomials defining the dual varieties of Grassmannians $\Gr(3,9)$ and $\Gr(4,8)$ both in terms of the fundamental invariants and in terms of a generic semi-simple element. We restrict the polynomial defining the dual of the adjoint orbit of $\text{E}_{8}$ and obtain the polynomials of interest as factors. To find an expression of the $\Gr(4,8)$ discriminant in terms of fundamental invariants, which has $15,942$ terms, we perform interpolation with mod-$p$ reductions and rational reconstruction.  From these expressions for the discriminants of $\Gr(3,9)$ and $\Gr(4,8)$ we also obtain expressions for well-known hyperdeterminants of formats $3\times 3\times 3$ and $2\times 2\times 2\times 2$.
\end{abstract}
\maketitle
\section{Introduction}
Cayley's $2\times 2\times 2$ hyperdeterminant is the well-known polynomial
\begin{multline*}
\Delta_{222} = 
x_{000}^{2}x_{111}^{2}
+x_{001}^{2}x_{110}^{2}
+x_{010}^{2}x_{101}^{2}
+x_{100}^{2}x_{011}^{2}
+4(x_{000}x_{011}x_{101}x_{110}+x_{001}x_{010}x_{100}x_{111})\\
-2(x_{000}x_{001}x_{110}x_{111}+x_{000}x_{010}x_{101}x_{111}+x_{000}x_{100}x_{011}x_{111}+\\
x_{001}x_{010}x_{101}x_{110}+x_{001}x_{100}x_{011}x_{110}+x_{010}x_{100}x_{011}x_{101})
.\end{multline*}
It generates the ring of invariants for the  group $\SL(2)^{\times 3}{\ltimes \mathfrak {S}_{3}}$ acting on the tensor space $\CC^{2\times 2 \times 2}$. It is well-studied in Algebraic Geometry. Its vanishing defines the projective dual of the Segre embedding of three copies of the projective line (a toric variety)  \cite{GKZ}, and also coincides with the tangential variety of the same Segre product \cite{LanWey_tan, OedingRaicu, MichalekOedingZwiernik}.  On real tensors it separates real ranks 2 and 3 \cite{ComonOtt_Binary}.
It is the unique relation among the principal minors of a general $3\times 3$ symmetric matrix \cite{HoltzSturmfels}. It is the starting point for many interesting studies in combinatorics \cite{GKZ}. In Computer Vision, the singular locus of its $2\times 2\times 2\times 2$ cousin \cite{WeymanZelevinsky_sing, HSYY_hyperdet} is the variety of quadrifocal tensors for flatlander cameras \cite{Oeding_quad}, and whose defining equations cut out the relations among principal minors of a general $4\times 4$ matrix \cite{LinSturmfels}.
In Quantum Information hyperdeterminants are used as a measure of entanglement \cite{gour2010, SarosiLevay} and can also be investigated to stratify the ambient space of multiqubit systems \cite{miyake_hyperdet, holweck_entanglement, holweck_4qubit2, LevayHolweck}.

Since Sylvester, Schl\"afli, and Cayley in the 19th century, efficient expressions of resultants, discriminants and hyperdeterminants have been key for solving polynomials. General resultants are provably difficult to compute \cite{HillarLim09mosttensor, sparseElimination}. On the other hand, for polynomials with extra structure like those that often come from applications \cite{DAndreaPoisson,HongMinimair, CCDRS_Discriminants, ErimisPan}, sparse resultants can often be computable because of the extra structure they inherit \cite{CannyEmiris, MR1251972, sparseElimination}.

Consider quadratic forms viewed  as square matrices $(a_{ij})$ restricted by linear dependencies causing the matrix to be symmetric. By projecting away from the skew-symmetric part we use roughly half as many parameters $p_{ij} = \frac{1}{2}(a_{ij}+a_{ji})$ with $i\leq j$, hence the matrix determinant applied to a symmetric matrix may be viewed as a sparse resultant. Since determinants are easy to compute (via Gaussian elimination), this is the standard way to compute the discriminant of the quadratic polynomial associated with the symmetric matrix. A central theme of this article is sparse (up to a natural change of coordinates) resultants.

Returning to Cayley's hyperdeterminant, there is a natural projection from $\CC^{2\times 2\times 2}$ to $S^{3}\CC^{2}$ obtained by symmetrizing the coordinates $x_{i,j,k} \mapsto s_{\text{sort}(i,j,k)} = \frac{1}{3!}\sum_{\sigma \in \mathfrak{S}_{3}}  x_{\sigma(i,j,k)}$, which was studied in greater generality in  \cite{oeding_hyperdet}. The restriction of $\Delta_{222}$ (via this projection) is the discriminant $\Delta_3$ of the binary cubic $\sum_{i,j,k \in \{0,1\}} s_{ijk} x_ix_jx_k$, explicitly
\[\Delta_3= s_{000}^{2}s_{111}^{2}-3s_{001}^{2}s_{011}^{2}
-6s_{000}s_{001}s_{011}s_{111}+4(s_{000}s_{011}^{3}+s_{001}^{3}s_{111}).
\]

We can construct $\Delta_{222}$ via restriction of a Grassmann discriminant. Choose a splitting $\CC^{6} = \CC^{2}\oplus \CC^{2}\oplus \CC^{2}$. We obtain a copy of $\CC^{2\times 2\times 2}$ inside $\bw{3}\CC^{6}$ by decomposing $\bw{3}(\CC^{2}\oplus \CC^{2}\oplus \CC^{2})$ as an $\SL(2)^{\times 3}$-module. The projection is given on Pl\"ucker coordinates by $p_{i,j,k} \mapsto x_{a,b,c}$, where
$a =\begin{cases} 0, & \text{if } i = 1 \\ 1, &\text{if } i = 4  \end{cases}$,
$b =\begin{cases} 0, & \text{if } j = 2 \\ 1, &\text{if } j = 5  \end{cases}$,
$c =\begin{cases} 0, & \text{if } k = 3 \\ 1, &\text{if } k = 6  \end{cases}$, and $p_{i,j,k} \mapsto 0$ if $\{i,j,k\}$ contains more than one element from any of the sets $\{1,4\}$, $\{2,5\}$, $\{3,6\}$.
We notice that $\Delta_{222}$ is the restriction of the defining polynomial of the dual of the Grassmannian $\Gr(3,6)$: $\Delta_{\Gr(3,6)}=$
\[\begin{smallmatrix}
{p_{123}}^{2}{p_{456}}^{2}+{p_{124}}^{2}{p_{356}}^{2}+{p_{125}}^{2}{p_{346}}^{2}+{p_{345}}^{2}{p_{126}}^{2}+{p_{134}}^{2}{p_{256}}^{2}+{p_{135}}^{2}{p_{246}}^{2}+{p_{245}}^{2}{p_{136}}^{2}+{p_{145}}^{2}{p_{236}}^{2}+{p_{235}}^{2}{p_{146}}^{2}+{p_{234}}^{2}{p_{156}}^{2}
\\
+4 (p_{123}p_{145}p_{246}p_{356}-p_{123}p_{145}p_{346}p_{256}-p_{123}p_{245}p_{146}p_{356}+p_{123}p_{345}p_{146}p_{256}+p_{123}p_{245}p_{346}p_{156}-p_{123}p_{345}p_{246}p_{156}\\
+p_{124}p_{135}p_{236}p_{456}+p_{124}p_{135}p_{346}p_{256}-p_{124}p_{235}p_{136}p_{456}-p_{124}p_{345}p_{136}p_{256}-p_{124}p_{235}p_{346}p_{156}+p_{124}p_{345}p_{236}p_{156}\\
-p_{134}p_{125}p_{236}p_{456}+p_{134}p_{125}p_{246}p_{356}+p_{234}p_{125}p_{136}p_{456}+p_{125}p_{345}p_{136}p_{246}-p_{234}p_{125}p_{146}p_{356}-p_{125}p_{345}p_{236}p_{146}\\
+p_{134}p_{235}p_{126}p_{456}-p_{134}p_{245}p_{126}p_{356}-p_{234}p_{135}p_{126}p_{456}+p_{135}p_{245}p_{126}p_{346}+p_{234}p_{145}p_{126}p_{356}-p_{235}p_{145}p_{126}p_{346}\\
+p_{134}p_{235}p_{246}p_{156}-p_{134}p_{245}p_{236}p_{156}+p_{234}p_{135}p_{146}p_{256}+p_{135}p_{245}p_{236}p_{146}-p_{234}p_{145}p_{136}p_{256}+p_{235}p_{145}p_{136}p_{246})\\
+2 (-p_{123}p_{124}p_{356}p_{456}+p_{123}p_{125}p_{346}p_{456}-p_{123}p_{345}p_{126}p_{456}+p_{123}p_{134}p_{256}p_{456}-p_{123}p_{135}p_{246}p_{456}+p_{123}p_{245}p_{136}p_{456}\\
-p_{123}p_{145}p_{236}p_{456}+p_{123}p_{235}p_{146}p_{456}-p_{123}p_{234}p_{156}p_{456}-p_{124}p_{125}p_{346}p_{356}+p_{124}p_{345}p_{126}p_{356}-p_{124}p_{134}p_{256}p_{356}\\
-p_{124}p_{135}p_{246}p_{356}+p_{124}p_{245}p_{136}p_{356}-p_{124}p_{145}p_{236}p_{356}+p_{124}p_{235}p_{146}p_{356}+p_{124}p_{234}p_{156}p_{356}-p_{125}p_{345}p_{126}p_{346}\\
-p_{134}p_{125}p_{346}p_{256}-p_{125}p_{135}p_{246}p_{346}-p_{125}p_{245}p_{136}p_{346}+p_{125}p_{145}p_{236}p_{346}+p_{125}p_{235}p_{146}p_{346}+p_{234}p_{125}p_{346}p_{156}\\
+p_{134}p_{345}p_{126}p_{256}-p_{135}p_{345}p_{126}p_{246}-p_{245}p_{345}p_{126}p_{136}+p_{145}p_{345}p_{126}p_{236}+p_{235}p_{345}p_{126}p_{146}-p_{234}p_{345}p_{126}p_{156}\\
-p_{134}p_{135}p_{246}p_{256}+p_{134}p_{245}p_{136}p_{256}+p_{134}p_{145}p_{236}p_{256}-p_{134}p_{235}p_{146}p_{256}-p_{134}p_{234}p_{156}p_{256}-p_{135}p_{245}p_{136}p_{246}\\
-p_{135}p_{145}p_{236}p_{246}-p_{135}p_{235}p_{146}p_{246}-p_{234}p_{135}p_{246}p_{156}-p_{145}p_{245}p_{136}p_{236}-p_{235}p_{245}p_{136}p_{146}+p_{234}p_{245}p_{136}p_{156}\\
-p_{235}p_{145}p_{236}p_{146}+p_{234}p_{145}p_{236}p_{156}-p_{234}p_{235}p_{146}p_{156}).
\end{smallmatrix}\]

Our aim is to explain these coincidences geometrically and provide several generalizations. In particular, we will find the defining polynomials for the duals of $\Gr(3,9)$ and $\Gr(4,8)$ by restricting discriminants of the adjoint orbits of $\text{E}_{8}$ and $\text{E}_{7}$ respectively, and subsequently use them to recover the $3\times 3\times 3$ and $2\times 2\times 2\times 2$  hyperdeterminants (again by restricting the discriminants of the Grassmannians). These examples fit nicely into the story of Vinberg's $\theta$-representations, (see \cite{Gruson2013} for connections with moduli of abelian varieties and free resolutions) and this rich theory helps make our computations manageable.

In Section~\ref{sec:mtangent} we give Theorem~\ref{thm:tan} which generalizes a lemma from the famous textbook \cite{GKZ} and allows us to study projections (restrictions) of polynomials defining dual varieties.
 In Section~\ref{sec:lie} we restrict the discriminants $\Delta_G$ of  adjoint varieties for $G=\text{E}_8,\text{E}_7,\text{E}_6$ and $\SO(8)$.  We obtain divisibility relations between the restrictions of discriminants of certain Grassmannians and of discriminants of adjoint orbits (see Figure~\ref{fig:table}).
In Section~\ref{sec:semi-simple} we describe a Jordan-Chevalley decomposition for $\mathfrak{e}_{8}$ that allows us to interpret elements of $\bw{3}\CC^{9}$ and $\bw{4}\CC^{8}$ as being semi-simple, which in turn allows us to evaluate the restriction of the $\text{E}_{8}$ discriminant to generic semi-simple elements of $\bw{3}\CC^{9}$ and $\bw{4}\CC^{8}$ respectively. 
In Section~\ref{sec:fundamental} we construct expressions for the $\Gr(3,9)$ and $\Gr(4,8)$ discriminants as polynomials in fundamental invariants using linear interpolation, reductions mod $p$, and rational reconstruction. 
In Section~\ref{sec:gr39gr48} we give methods to evaluate the $\Gr(3,9)$ and $\Gr(4,8)$ discriminants. In particular, we make use of Katanova's explicit expressions for the fundamental invariants as traces of powers of special matrices \cite{Katanova}. We also describe their respective restrictions to tensors of format $3\times 3\times 3$ and $2\times 2\times 2\times 2$.

\section{Decomposing restrictions of dual varieties}\label{sec:mtangent}
Let $V$ denote an $n$-dimensional vector space over $\CC$ with basis $e_{1},\ldots,e_{n}$. 
A splitting $V = A\oplus B$, which we always assume to be non-trivial, induces a splitting on the dual, $V^{*} = A^{*}\oplus B^{*}$, where $A^{*}$ are linear functionals on $A$  extended to linear functionals on $V$ by setting their value to $0$ on $B$.
The homogeneous polynomials of degree $d$  on a basis of $V$ are in one-to-one correspondence with the space of symmetric $d$-forms on $V$, denoted $S^{d}  V^{*}$. Since $S^{d}V^{*} = S^{d}A^{*}\oplus \bigoplus _{0\leq i\leq d}S^{d-i}A^{*}\otimes S^{i}B^{*}$ we have linear projections $S^{d} V^{*} \to S^{d}A^{*}$ for every $d\geq 0$ induced by the projection $V^{*} \to A^{*}$. 
This also induces rational maps on projective spaces $\PP V \dto \PP A$ and $\PP V^* \dto \PP A^*$. The indeterminacy locus of a map induced from a linear map of vector spaces is the projectivization of the kernel. By convention, for any subvariety $X \subset \PP V$, the image of $X$ under a rational map is the closure of the image of the (regular) map on a relatively open subset of $X$ where the map is defined. We will use $\pi_B$ to denote the maps induced from the projection $V\to A$ (affine / projective).

We will also project defining polynomials $\Delta$ (discriminants) in $\PP V $. Such  elements of $S^d(A\oplus B)$ and can be projected to $S^d(A)$ by restricting $\Delta\in S^d V$ to the subspace $A^*$. To not confuse the two types of projection we will use $\Delta_{|A^*}=\Res (\Delta,A^*)$ to denote the projection/restriction of the discriminant $\Delta$ to $A^*$. Note there is a unique decomposition $\Delta=\Res (\Delta,A^*)+g$ with $g\in (S^dA^*)^\perp$, where $\perp$ denotes the annihilator, so that ${A^{*}}^{\perp} = B$.

\begin{remark}Given $f\in S^d(A^*\oplus B^*)$ we have $\V( \Res(f,  A^*), B^* )= \V(f)\cap \PP A$, that is to say that the zero-set of the restriction of $f$ to $A$ (still viewed as a polynomial on $V$) together with a set of linear forms making a basis of $B^{*}$ is precisely the intersection $\V(f) \cap \PP A$. 
\end{remark}

For a projective variety  $X\subset \PP V$, let $\widehat T_x X \subset V$ denote the tangent space at $x\in \widehat X \subset V$, where $\widehat{\cdot}$ indicates the cone in $V$. The \emph{projective dual variety} of $X$, denoted $X^{\vee}\subset \PP V^{*}$, is
\[
X^{\vee}:= \overline{ \left \{ [H] \in \PP V^{*} \mid  \exists [x] \in X_{\text{sm}}\;\; s.t.\;\; H\supset \widehat {T}_{x}X \right\} },
\]
the Zariski closure of hyperplanes tangent to $X$ at smooth points, \cite{GKZ}.
If $X$ is irreducible, then (over $\CC$) so is $X^{\vee}$, and when $X^{\vee}$ is a hypersurface, we often denote by $\Delta_{X}$ its defining polynomial, $\V(\Delta_{X}) = X^{\vee}$, and refer to it as a \emph{hyperdeterminant} or a \emph{discriminant}.

Theorem~\ref{thm:tan} below gives a sufficient condition to guarantee that the restriction of the hyperdeterminant is divisible by the equation of the dual of another variety. These questions were considered for hyperdeterminants and symmetric tensors in \cite{oeding_hyperdet}, and for hyperdeterminants and other Schur functors in \cite{tocino}. Similar considerations were also made by Pedersen and Sturmfels \cite[Section~4]{pedersen1993product} and \cite[Theorem~5.1]{pedersen1993product}, and are key to Poisson formulas \cite{DAndreaPoisson}.

\begin{theorem}\label{thm:tan} Consider a non-trivial vector space splitting $V=A\oplus B$.
Let $X\subset \PP V$ and $Y\subset \PP A$ be projective varieties. Let $\pi_B$ denote rational map $\PP V \dto \PP A$ induced from the projection $V \to A$. 
If for each smooth point $[y] \in Y$ there is a smooth point $[x] \in X$ such that $\pi_{B}(\widehat T_{x} X) \subset  \widehat T_{y} Y$ (\emph{the tangency condition}), then
\[
Y^{\vee} \subseteq X^{\vee}\cap \PP A^{*}.
\]
Moreover if $X^{\vee}$ and $Y^{\vee}$ are hypersurfaces defined respectively by polynomials $\Delta_{X}$ and $\Delta_{Y}$ and, 
for every general point $[h]\in Y^{\vee}$,  $H = \mathcal{V}(h)$, viewed as a hyperplane in $\PP V$, is a point of multiplicity $m$ of $X^{\vee}$  then
\[
\Delta_{Y}^{m}\mid \Res(\Delta_{X},A^*).
\]
\end{theorem}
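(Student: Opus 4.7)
My plan is to prove the two assertions in sequence. The first, $Y^{\vee}\subseteq X^{\vee}\cap \PP A^{*}$, follows directly from the tangent-containment hypothesis once one unravels what it means for $h \in A^{*}\subset V^{*}$ to annihilate the affine tangent space to $X$. The second is a multiplicity comparison: once $\pi_{B}(\Delta_{X})$ is known to vanish on $Y^{\vee}$, the only remaining question is the exact power of $\Delta_{Y}$ dividing it, which is controlled by the local behavior of $\Delta_{X}$ at a general point of $Y^{\vee}$.

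For the first assertion, fix a smooth $[y] \in Y$ and a tangent hyperplane $[h] \in \PP A^{*}$ with $\V(h) \supset \widehat{T_{y}Y}$. By hypothesis there is a smooth $[x]\in X$ with $\pi_{B}(\widehat{T_{x}X}) \subset \widehat{T_{y}Y}$. For any $v \in \widehat{T_{x}X}\subset V$, write $v = \pi_{B}(v) + b$ with $b \in B$; then
\[
h(v) \;=\; h(\pi_{B}(v)) + h(b) \;=\; 0,
\]
since $\pi_{B}(v) \in \widehat{T_{y}Y}\subset \ker h$ and $h|_{B}=0$ (recall $h\in A^{*}\subset V^{*}$). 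Hence $\V(h) \supset \widehat{T_{x}X}$, so $h$ is tangent to $X$ at the smooth point $[x]$, giving $[h] \in X^{\vee}\cap \PP A^{*}$. Taking the Zariski closure as $[y]$ varies over $Y_{sm}$ yields $Y^{\vee}\subseteq X^{\vee}\cap \PP A^{*}$.

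For the second assertion, the first part implies $\pi_{B}(\Delta_{X})$ vanishes on $Y^{\vee}$; if $\pi_{B}(\Delta_{X}) \equiv 0$ the conclusion is vacuous, so assume not. Since $\Delta_{Y}$ is the irreducible defining polynomial of $Y^{\vee}\subset \PP A^{*}$, factor $\pi_{B}(\Delta_{X}) = \Delta_{Y}^{k}\cdot G$ with $\Delta_{Y}\nmid G$, so $k\geq 1$ is the order of vanishing along $Y^{\vee}$; the goal is $k\geq m$. Choose $[h]\in Y^{\vee}$ general enough that $[h]$ is a smooth point of $Y^{\vee}$, that $G(h)\neq 0$, and that $[h]$ has multiplicity exactly $m$ on $X^{\vee}$ (each is an open condition, the last guaranteed by hypothesis). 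Then $\operatorname{mult}_{[h]}\pi_{B}(\Delta_{X}) = k\cdot \operatorname{mult}_{[h]}\Delta_{Y} = k$. On the other hand, restricting a polynomial to a linear subspace can only preserve or increase the multiplicity at a point of that subspace: the Taylor expansion of $\Delta_{X}$ at $[h]$ begins in degree $m$, and the Taylor expansion of $\pi_{B}(\Delta_{X})$ at $[h]$ is obtained by restricting that expansion to directions in $A^{*}$, so it also begins in degree $\geq m$. Therefore $k\geq m$, which gives $\Delta_{Y}^{m}\mid \pi_{B}(\Delta_{X})$. The main delicacy is ensuring $[h]$ is generic enough that the local multiplicity of $\pi_{B}(\Delta_{X})$ at $[h]$ faithfully records the multiplicity along $Y^{\vee}$; this is precisely why the conditions $[h]\notin \V(G)$ and $[h]\in (Y^{\vee})_{sm}$ are imposed.
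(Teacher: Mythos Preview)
Your proof is correct. The first part is identical to the paper's argument. For the second part, both you and the paper rely on the same key observation---that restricting $\Delta_{X}$ to the linear subspace $A^{*}$ can only preserve or increase the multiplicity at a point $[h]\in A^{*}$ (the paper phrases this as its ``Step 1'', computing $A$-partial derivatives of $\Delta_{X}=\pi_{B}\Delta_{X}+\mathcal{O}(B)$ and evaluating at $h$). Where you differ is in how you extract the divisibility: you factor $\pi_{B}(\Delta_{X})=\Delta_{Y}^{k}G$ once and for all and compare the local multiplicity $k$ at a general smooth point of $Y^{\vee}$ directly to $m$, whereas the paper runs an induction on $m$, peeling off one factor of $\Delta_{Y}$ at a time via an explicit Leibniz-rule computation of $\partial^{m-1}\pi_{B}\Delta_{X}/\partial A^{m-1}$. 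Your route is a bit cleaner and avoids the inductive bookkeeping; the paper's route is more hands-on and makes the role of the smoothness of $Y^{\vee}$ at $[h]$ explicit through the nonvanishing of a product of first partials of $\Delta_{Y}$. Both arrive at the same place by the same mechanism.
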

\begin{proof}
We identify $V^{*} = A^{*}\oplus B^{*}$. Suppose $h \in A^{*} \subset V^{*}$ and  $\mathcal{V}(h) \subset A$ is a hyperplane that is general among those hyperplanes tangent to $Y$ at a smooth point $[y]$. Then the tangency condition guarantees that there is a smooth point $[x]\in X$ such that every vector $t \in \widehat T_{x}X$ can be written uniquely as $ t = a+ b$ with $a \in \widehat T_{y}Y$ and $b \in B$. Since $h$ is a linear form we have $h(t) = h(a)+h(b)$,  and $h(a) = 0$ because $a \in \widehat T_{y}Y \subset A$, and $h(b) = 0$ for all $b \in B$ since $h\in A^*$. Therefore, $h$ vanishes at every point of $\widehat T_{x}X$,  and $H =\V(h)$ is a hyperplane tangent to $X$. This concludes the first part.

Now for the second part.  Let $\Sing_{m-1}(X^{\vee})$ denote the closure of the points of multiplicity $m$ in $X^{\vee}$.

\textbf{Step 1}: If $[h] \in Y^{\vee}$ and $[h] \in \Sing_{m-1}(X^{\vee})$ then $[h] \in \Sing_{m-1}(X^{\vee} \cap \PP A^{*})$.

Write $\Delta_{X} = \Res (\Delta_{X},A^*) + \mathcal{O}(B)$, with $\mathcal{O}(B)$ denoting those terms with at least one variable in $B$.
Evaluate at $h$: $\Delta_{X}(h) = \Res (\Delta_{X},A^*)(h) + \mathcal{O}(B)(h) = 0$ since by the first part we get  $\Res (\Delta_{X},A^*)(h) = 0$  and $\mathcal{O}(B)(h) = 0$ because $h$ doesn't use any $B$-variables.
Taking derivatives,
\[\frac{\partial^{k} \Delta_{X}}{\partial A^{k}} =
 \frac{\partial^{k} \Res (\Delta_{X},A^*)}{\partial A^{k}} + \frac{\partial^{k} \mathcal{O}(B)}{\partial A^{k}}
 ,\] 
 where $\partial A^{k}$ stands for any $k$-order partial derivative with respect to variables in $A$. Now evaluate at $h$ for $k \leq m-1$ to obtain
\[0 =
 \frac{\partial^{k} \Res (\Delta_{X},A^*)}{\partial A^{k}}(h) +0
. \]
So $[h]$ is of multiplicity at least $m$ on $X^{\vee} \cap \PP A^{*}$.

\textbf{Step 2}: If for all $[h] \in Y^{\vee}$ we have that $[h] \in \Sing_{m-1}(X^{\vee} \cap \PP A^{*})$ then $\Delta_{Y}^{m} \mid \Res (\Delta_{X},A^*).$

Without loss of generality, suppose $[h]$ is a smooth point of $Y^{\vee}$ throughout. By induction on $m$, with base case being the first case of the theorem, we assume that $[h]\in Y^{\vee} \cap \Sing_{m-2}(X^{\vee})$ implies that $\Res (\Delta_{X},A^*) = \Delta_{Y}^{m-1}\cdot F$. We want to show that $[h]\in Y^{\vee} \cap \Sing_{m-1}(X^{\vee})$ implies that $\Delta_{Y}$ divides $F$.
Now we compute
\[
\frac{\partial ^{m-1} \Res (\Delta_{X},A^*)}{\partial A^{m-1}} = (m-1)!\left (\prod_{a_j \in A^{m-1}} \frac{\partial \Delta_Y}{\partial a_j}\right) F + \Delta_{Y}\cdot G
.\]
By smoothness of $Y^{\vee}$ at $[h]$ there exists a collection  $A^{(m-1)}$ of $m-1$ (possibly not distinct) variables of $A$ such that
\[\left(\prod_{a_j \in A^{(m-1)}} \frac{\partial \Delta_Y}{\partial a_j}\right)(h)=\lambda\neq 0.
\]
Therefore, one obtains:
\[
0 = (m-1)!\lambda F(h) + 0
,\]
showing that $\Delta_{Y} \mid F$ because $h$ was general and $\Delta_{Y}$ is irreducible.
\end{proof}
\begin{remark}
In the case that $Y$ satisfies the tangency condition and $X^\vee$ and $Y^\vee$ are hypersurfaces, if there exist two distinct smooth points $[x_1]$, $[x_2]$ in $X$ such that $\pi_{B}(\widehat T_{x_{i}} X) \subset  \widehat T_{y} Y$, then $m\geq 2$.
This is because for a general $[h]\in Y$, the hyperplane $H=\mathcal{V}(h)\subset \PP V$ has (at least) two points of tangency $[x_1]$ and $[x_2]\in X$ one knows that $\text{mult}_{[h]} X^\vee \geq 2$.
\end{remark}

\begin{remark}\label{rem:paru}
A result of Parusi\'nski \cite{Parusinski} implies that the hypothesis $[h]$ is a singular point of $X^{\vee}$ of multiplicity $m$ is equivalent to the statement that the Milnor number of the singular section $X\cap \mathcal{V}(h)$ is $m$. In  particular if $\mathcal{V}(h)$ has $k$ points of tangency defining each a Morse singularity, i.e., the quadratic part of the singularity is of full rank, one directly obtains $m=k$. More generally if the points of tangency of $\mathcal{V}(h)$ are isolated, then $m=\text{mult}_{[h]} X^\vee$ is  greater than the number of points of tangency.
\end{remark}
\begin{remark}
The inclusion of the \emph{tangency condition} corrects a mistake in \cite[Prop.~4.5]{oeding_hyperdet} that neglected this condition.
In \cite{oeding_hyperdet} two proofs of a component of the main result were given, one of which was independent of \cite[Prop.~4.5]{oeding_hyperdet}. So, one could safely delete \cite[Prop.~4.5]{oeding_hyperdet} and  Proof~1 of \cite[Lem.~5.1]{oeding_hyperdet} and the results of the article would not change.
On the other hand, the tangency condition is satisfied by the varieties considered in \cite[Lem.~5.1]{oeding_hyperdet}, so Theorem~\ref{thm:tan} could be used in place of \cite[Prop.~4.5]{oeding_hyperdet} in Proof~1 of \cite[Lem.~5.1]{oeding_hyperdet}.
\end{remark}

We recall a foundational result of Gelfand-Kapranov-Zelevinsky: 
\begin{prop}[{\cite[Prop.~I.4.1 and Cor.I.4.2]{GKZ}}]\label{prop:gkz}
Consider a variety $X \subset \PP V = \PP( A\oplus B)$.
\begin{enumerate}
\item \label{gkz:proj} Suppose $\widehat X\cap B = 0$ and $\dim \widehat X < \dim A$.
\item\label{gkz:projiso} Suppose further that $\pi_{B}\colon X \dto \pi_{B}(X)$ is an isomorphism of algebraic varieties. \end{enumerate}
Then  for $Y = \pi_{B}(X) $ we have
\[
Y^{\vee} \subseteq X^{\vee}\cap \PP A^{*} \quad\text{in case \eqref{gkz:proj}} \quad and \quad Y^{\vee} = X^{\vee}\cap \PP A^{*} \quad \text{in case \eqref{gkz:projiso}.}
\]
If $X^{\vee}$ and $Y^{\vee}$ are hypersurfaces defined respectively by polynomials $\Delta_{X}$ and $\Delta_{Y}$, then
\[
\Delta_{Y}\mid \Res (\Delta_{X},A^*)  \quad\text{in case \eqref{gkz:proj}  \quad and \quad} \left[\Delta_Y \right]  = \left[ \Res (\Delta_X,A^*) \right] \quad \text{in case \eqref{gkz:projiso}}.
\]
\end{prop}

Theorem~\ref{thm:tan} allows us to expand the scope slightly. The requirement $X \cap \PP B = \emptyset$ seems to be in place to make sure that the projection is well defined on projective space -- this hypothesis is rendered unnecessary by considering the image of the rational map. In fact, we will see situations where it is desirable to take this more flexible viewpoint.

\begin{cor}\label{cor:proj}
The image of the rational map $Y_\pi:=\pi_B(X)$ satisfies the tangency condition, and hence $ Y_\pi^\vee \subset X^\vee \cap \PP A^*$.
\end{cor}

\begin{proof}
It suffices to verify the tangency condition on an open subset and apply Theorem~\ref{thm:tan}. 
Let $Y' = \{ \pi_B(x) \in \PP A \mid x \in X_{\text{sm}}-\PP B\} $ (without taking closure).  We claim points in the interior of $Y'$ satisfy $\pi_B(\widehat T_x X) = \widehat T_{\pi_B(x)}Y'$. 
A vector is in the tangent space $\widehat T_{\pi_B(x)}Y'$ if and only if it is of the form $a'(0)$ for some smooth curve $[a(t)]\subset Y'$ with $a(0) = \pi_B(x)$. 
A vector is in $\widehat T_x X$ if and only if  it is of the form $\gamma'(0) = a'(0) + b'(0)$ for some smooth curve $[\gamma(t)] \subset X$ with $\gamma(0) = x$ and $\gamma(t) = a(t) + b(t)$ with $a(t) \in A$ and $b(t) \in B$ for all $t$. Since $\pi_B(\gamma'(0)) = a'(0)$ we have ``$\subset$''. Since every point in an open neighborhood of $[y] \in Y'$ has a preimage in $X_{\text{sm}}-\PP B$, every (local) curve $[a(t)]$ in $Y'$ through $[y]\in Y'$ has a preimage curve $[\gamma(t)]$ in $X$, and such that $\pi_B(\gamma(t)) = a(t)$. Without loss of generality we can pass to a subset of $X\cap \gamma(t)$ to insist that  $\gamma$ is smooth and $\pi_B$ is one-to-one on $X\cap \gamma(t)$. Thus we can construct a curve $b(t)$ by considering $b(t):= \gamma(t) - a(t)$. Thus every tangent vector in $\widehat T_yY'$ has a preimage in $\widehat T_xX$, so we obtain ``$\supset$.''
\end{proof}
\begin{remark}
Assume $X^\vee$ is a hypersurface. To understand the factoring of its restriction we want to know which of the subvarieties showing up in the geometric decomposition are not dual-defective (their dual varieties are hypersurfaces). The multiplicities occurring in the factorization can be understood by Theorem~\ref{thm:tan}, or in cases when few factors are present a degree argument may suffice. As noted in \cite[p.33]{GKZ}, points of $Y= \pi_B(X)$ on the boundary of $Y' = \pi_B(X- \PP B)$ may also contribute to the restriction of the dual (see Example~\ref{ex:curve}).
\end{remark}

Even for special subvarieties of the projection, like intersections $Y_\cap = X\cap \PP A $, the tangency condition is not automatic, as shown in Example~\ref{ex:traceless} where $Y_\cap$ \emph{does not} satisfy the tangency condition and does not contribute to the restriction of the dual. Nevertheless we can try to find subvarieties of $Y = \pi_B(X)$  whose dual might contribute to the restriction of the dual of $X$ by considering the locus where the projection drops rank, what we call the \emph{$\pi_B$-rank loci}, which we define below. 

Let $\underline V$ denote the tautological vector bundle on a variety $X\subset \PP V$. 
Suppose $V = A\oplus B$ and let $\pi_{B}$ denote the projection $\underline V \to \underline A$, the fiber-wise projection $V \to A$. Similarly define $\pi_B$ on the cone over the total space $X \times \PP \underline V$:
\begin{eqnarray*}
\pi_{B} \colon \widehat X\times  \underline V &\to&  A \times  \underline A\\
(x,v) &\mapsto& (\pi_{B}(x), \pi_{B}(v)).
\end{eqnarray*}
The map $\pi_B$ also restricts to the tangent sheaf $\mathcal{T}X$. 
By definition, the map $\pi_B \colon  \widehat T_x X \to A$ has rank at most $\dim \widehat X$ for all $[x] \in X_{\text{sm}}$. 
For each $k$ with  $0\leq k \leq\dim \widehat X$ define the points
\[ \tilde Y_k = \overline{ \{ (\pi_B(x), \pi_B(\widehat T_x X) ) \mid \dim \pi_B(\widehat T_x X) = k, \text{ and } [x] \in X_{\text{sm}}\}}.\]
Define the rank locus $Y_k$ as the projection onto the first component.  We can then ask if these rank loci satisfy the tangency condition.

\subsection{Examples}
Now we illustrate our geometric technique in several cases, including when the center of projection intersects the variety $X$,  classical determinants and Pfaffians, and the example in the introduction.
We often consider the  Grassmannian $\Gr(k,V) \subset \PP \bw{k} V$, the Segre variety $\Seg(\PP V^{\times d}) \subset \PP V^{\otimes d}$, and the Veronese variety $\nu_{d}\PP V \subset \PP S^{d}V$, which are homogeneous $G$-varieties for $G$ respectively $\SL(V)$, $\SL(V)^{\times d}\ltimes \mathfrak{S}_{d}$, or $\SL(V)$.  For a partition $\lambda$ of an integer $d>0$ with $t$ parts the Chow variety, $\Chow_{\lambda}\PP V$,  is the projection to symmetric tensors of the Segre-Veronese variety $\Seg_{\lambda}(\PP V)^{|\lambda|}$. Specifically, the general point $[v_{1}^{\otimes \lambda 1}\otimes \cdots \otimes v_{t}^{\otimes \lambda_{t}}] \in \Seg_{\lambda}(\PP V)^{|\lambda|}$  projects to the general point  $[v_{1}^{ \lambda 1}\circ \cdots \circ v_{t}^{\lambda_{t}}] \in \Chow_{\lambda}\PP V$.

First we consider a nice non-example, that explains a case when the restriction is zero.
 \begin{prop}\label{prop:center}
Consider a variety $X \subset \PP V = \PP(A \oplus B)$.  If for all $h \in A^{*}\subset V^{*}$ there is an $[x]\in X_{\text{sm}}$ such that $h(\widehat T_{x} X) = 0$ then $\Res(\Delta_{X},A^*) \equiv 0$.
 \end{prop}
 \begin{proof}
Our assumptions imply that  $\PP A^{*} \subset  X^{\vee} \cap \PP A^{*}$. So $X^{\vee} \cap \PP A^{*} = \PP A^{*}$.
 \end{proof}
 Tocino \cite{tocino} studied the case when $X$ is the Segre variety and noted that all but two Schur functors (projections) $S_{\lambda}\colon V^{\otimes d}\to V^{\otimes d}$  satisfy the hypotheses of Proposition~\ref{prop:center}, and thus most hyperdeterminants of format $n^{\times d}$ restrict to zero for special symmetry types of tensors.

\begin{example}\label{ex:traceless} The following illustrates a case when the tangency condition fails.
Let $X = \Seg(\PP V^{*}\times \PP V)$, the projective variety of rank-1 linear transformations $V\to V$, with $n=\dim V$, and $n\geq 3$. 
As $\SL(V)$-modules $V^{*}\otimes V = \Gamma^{n-1,1}\oplus \Gamma^{0}$, where $\Gamma^{n-1,1} \cong \sl(V)$ is the space of traceless linear transformations $V\to V$ and $\Gamma^{0}\cong \CC$ is the span of the identity transformation.
Consider the projection (which does not generally preserve matrix rank)
\begin{eqnarray*}
\pi_{\Gamma^0}\colon V^{*}\otimes V & \to&  \Gamma^{n-1,1} \\
A&\mapsto& A-\frac{\tr(A)}{n}\Id.
\end{eqnarray*} 
Denote the image by $Y_\pi :=\pi_{\Gamma^0}(X)$, which consists of points of the form $[\a\otimes b - \frac{\a(b)}{n}\Id_{V}]$, with $\Id_{V}$ the identity in $V^{*}\otimes V$, and $n=\dim V$. 
A straightforward calculation shows that 
\[\widehat T_{\pi_{\Gamma^0}( \a\otimes b)} \pi_{\Gamma^0} X = \{ \a'\otimes b + \a\otimes b' - \frac{\a'(b)+\a (b')}{n}\Id_{V} \mid \;\text{ with } \a' \in V^* \;\text{ and } b'\in V\}.
\]
The Segre variety has tangent space $\widehat T_{\a \otimes b} \Seg(\PP V^* \times \PP V) =  V^* \otimes b + \a \otimes V$, whence one sees that the projection of the tangent space is equal to the tangent space of the projection. This is an example of Corollary~\ref{cor:proj}, which implies that $Y_\pi^\vee \subset X^\vee \cap \PP \Gamma^{n-1,1}$.  We note also that here the hypotheses of Proposition~\ref{prop:gkz}\eqref{gkz:proj} are also satisfied:  $\Seg(\PP V^* \times \PP V )\cap \PP \Gamma^0 = \emptyset$ because the rank of the matrices in the Segre are all 1, which is not the rank of non-zero scalar multiple of the identity for $n> 1$, and $\dim \widehat \Seg(\PP V^* \times \PP V) = 2n-1 < n^2-1 = \dim \Gamma^{n-1,1}$ for $n>2$.

Let us consider the $\pi_{\Gamma^0}$-rank loci.
The map $\pi_{\Gamma^0} \colon \widehat T_{\a\otimes b} \Seg(\PP V^* \times \PP V) \to \Gamma^{n-1,1}$ drops rank precisely for points $\a \otimes b$ with $\a (b) = 0$, which coincides with $Y_{\cap} = \{[\a\otimes b] \mid \a(b) =0\}$, the rank-1 traceless linear transformations $V\to V$. Note that $Y_\cap$ has codimension 1 in $Y_\pi$. 
The dual $Y_\cap^{\vee}$ is a degree $n(n-1)$ hypersurface (it is the dual of an adjoint orbit, cut out by the discriminant of the characteristic polynomial), whereas $X^{\vee}$ is a degree $n$ (determinantal) hypersurface. Since the restriction of $\Delta_{X}$ cannot have degree at least $n(n-1)$ if $n\geq 3$, this must be a situation where Theorem~\ref{thm:tan} does not apply.  
Indeed, the respective spaces  $\widehat T_{e^{1}\otimes e_{n}}Y_\cap$ and $\pi_{\Gamma^0}(\widehat T_{e^{1}\otimes e_{n}}X)$  comprise matrices of the forms
\[
\left(\begin{smallmatrix}
r & *&\dots &* &* \\
0 & 0 & \dots &0 &*\\
\vdots & &\ddots &0 & * \\
0 & 0 & \dots &0&-r\\
\end{smallmatrix}\right)
,
\quad\text{and respectively } \left(\begin{smallmatrix}
s & *&\dots &* \\
0 & 0 & \dots &*\\
\vdots & &\ddots & * \\
0 & 0 & \dots &t\\
\end{smallmatrix}\right)
 - \frac{s+t}{n} \Id_{V}.
\]
In the first case elements have rank $\leq 2$, but in the second the matrices have ranks $0,1,2,n-1$ and $n$. These rank conditions are independent of our choices of coordinates.  So, the hypothesis $\exists [x] \in X =\Seg (\PP V^{*}\times \PP V)$ such that $\pi_{\Gamma^0}(\widehat T_{x}X) \subset \widehat T_{y}Y_\cap$ must fail since the projection $\pi_{\Gamma^0}(\widehat T_{x}X)$ contains elements of rank $n\geq3$. 
 Hence Theorem~\ref{thm:tan} only allows us to conclude $Y_{\pi}^\vee \subset X^\vee \cap \PP (\Gamma^{n-1,1})^* $.    
Now, checking that $Y_{\pi}^\vee$ is a hypersurface we find $\Delta_{Y_{\pi}}\mid \Res(\Delta_{X},\Gamma^{n-1,1})$. By computing degrees one can conclude that this is the only divisibility. 
One can also check that the restriction  $\Res(\Delta_X,\Gamma^{n-1,1}) = \det(A - \frac{tr (A)}{n} Id)$ is irreducible and hence $[\Delta_{Y_\pi}] = [ \Res(\Delta_{X},\Gamma^{n-1,1})]$.
\end{example}
\begin{example}\label{ex:mat}
Now we consider the well-known restrictions of the determinant to symmetric and skew-symmetric matrices.
Consider $V \otimes V$ with the usual $\SL(V)\times \SL(V)$ action, thought of as a space of square matrices.
The decomposition $V^{\otimes 2} = S ^{2}V \oplus \bw{2} V$ as $\SL(V)$-modules gives projections that we denote $\pi_{\circ}$ and $\pi_{\wedge}$ respectively. On indecomposable elements  we have $\pi_\circ (a\otimes b) = a\circ b := \frac{1}{2}(a\otimes b + b\otimes a)$, and respectively $\pi_\wedge (a\otimes b) = a\wedge b := \frac{1}{2}(a\otimes b - b\otimes a)$. In bases, for a square matrix $A$ we have $\pi_\circ = \frac{1}{2}(A+A^\top)$ and $\pi_\wedge = \frac{1}{2}(A-A^\top)$, where $\cdot^{\top}$ denotes the matrix transpose. 
 Let $X = \Seg(\PP V \times \PP V) \subset \PP V^{\otimes 2} $ denote  the projective variety of rank-1 square matrices.
The tangent sheaf over $X$ is $TX =\PP \{a \otimes V + V \otimes b \to a\otimes b \in \widehat X \mid a,b\in V\}$.  
We will compute the projection rank loci for the respective projections of $X$. 

The hypotheses of Proposition~\ref{prop:gkz}\eqref{gkz:proj} apply in the symmetric case since there are no rank-1 skew-symmetric matrices, i.e. $\widehat X\cap \bw{2}V = 0$. However, in the skew-symmetric case we cannot apply Proposition~\ref{prop:gkz}\eqref{gkz:proj} since $\widehat X\cap  S^2 V = \widehat \nu_2 \PP V \neq 0$. So, we show how to use Theorem~\ref{thm:tan} to understand both restrictions.

\textbf{Symmetric case}: Consider $\pi_\circ X = \Chow_{1,1} \PP V$, the Chow variety of completely reducible quadrics. The projection of the tangent cone is $\pi_\circ \widehat T_{[a\otimes b]} X = \pi_{\circ}(a \otimes V + V \otimes b) = a\circ V + V\circ b$. For $a$, $b$ both nonzero and not colinear, the space $a\circ V + V\circ b$ is the tangent cone  at the point $[a\circ b]$ of $\Chow_{1,1} \PP V \subset \PP S^2 V$. Moreover such $[a\circ b]$ with $a\wedge b \neq 0$ form the open subset of smooth points of the Chow variety. So, the Chow variety satisfies the tangency condition, an example of Corollary~\ref{cor:proj}.

The projection $\pi_\circ$ drops rank precisely on the closed subset $\{[a\circ b] \mid a\wedge b = 0 \}$, which coincides with the Veronese variety $ \nu_2(\PP V)$. Hence, the $\pi_\circ$-rank loci consist precisely of the Chow and Veronese varieties.
The Veronese variety satisfies the tangency condition: every tangent space of the Veronese is of the form  $\widehat T_{a^{\circ 2}}\nu_2 \PP V = a\circ V$, the point $[a^{\circ 2}]$ is a smooth point of the Veronese, and the tangent space  $\widehat T_{a\otimes a} X$ projects to it.
The only way for $\widehat T_{p\otimes q}X$ to project into a tangent space of $\nu_2\PP V$ is for $[p\otimes q]=[a\otimes a]$ for some $a\in V$. To see this, note that the symmetric matrix $p\circ q$ has image $\textrm{span}\{p,q\}$, which is 1-dimensional if and only if $p$ and $q$ are proportional. Hence the multiplicity of $(\nu_2 \PP V)^\vee$ as a component of the restriction of the determinant to symmetric matrices is $m_1=1$.

Note that $(\Chow_{1,1}\PP V)^{\vee}$ is not a hypersurface \cite[Thm.~1.3]{oeding_hyperdet}. Thus we can only determine that the discriminant of a quadric (the equation of the dual of the Veronese $\nu_{2}(\PP V)$) divides the determinant of a symmetric matrix (up to scale). By comparing degrees we conclude
\[
[\Res(\Delta_{\Seg(\PP V\times \PP V)},S^2 V^*)] = [\Delta_{\nu_{2}\PP V}]
.\]

\textbf{Skew-symmetric case}: Assume $n\geq 2$. 
The Grassmannian is the projection $ \Gr(2, V) = \pi_{\wedge}(X)$, which is a smooth homogeneous variety. We can conclude that $X^\vee \cap \PP \bw{2} V$ contains $\Gr(2, V)^\vee$ by Theorem~\ref{thm:tan}. The projection $\pi_\wedge$ of $\widehat T_x X$ can drop rank only when the base point is $0$, but $[0]$ is not a point of the ambient projective space, and the rank loci yield no additional information.  To understand the splitting of the skew-symmetric determinant, we need to know when $\Gr(2,V)^\vee$ is a hypersurface and determine the multiplicity.

A dimension count shows that $\Gr(2,V)^\vee$ is a hypersurface if and only if $n=\dim V$ is even and $n\geq 4$.  
Specifically, if $n$ is odd we get the additional condition that any hyperplane $H \subset \PP \bw{2}V$, viewed as a matrix, has (at least) a 1-dimensional kernel $K$. So, if $H$ is tangent to the Grassmannian $\Gr(2,V)$ at a point $E \in \Gr(2,V)$ we can assume that $K$ is not contained in $E$ because the dimension of the kernel must be odd. Then $H$ is  tangent along the line $\PP(\bigwedge^2\langle E , K\rangle)$. Thus $\Gr(2,V)^{\vee}$ is not a hypersurface, so  the discriminant is $0$ for odd $n$. 

To calculate the multiplicity in the even case, note that any skew-symmetric hyperplane that vanishes on $\widehat T_{a\wedge b}\Gr(2,V)$ also vanishes on
$\widehat T_{a\otimes b}\Seg(\PP V\times \PP V)$ and on 
$\widehat T_{b\otimes a}\Seg(\PP V\times \PP V)$, which are distinct when $a$ and $b$ are not colinear. So, Theorem~\ref{thm:tan} implies that we have at least $m=2$ and $\Delta_{\Gr(2,V)}^{2} \mid \Res(\Delta_{\Seg(\PP V \times \PP V)},\bigwedge^2 V^*)$. Further work, for instance computing the degrees, allows one to conclude that the multiplicity is exactly 2.
Thus, we find the well-known result that when $n$ is even, the square of the discriminant of a skew 2-form (the Pfaffian) is the determinant of a skew-symmetric matrix (up to scale):
\[
[\Res(\Delta_{\Seg(\PP V\times \PP V)},\bigwedge^2 V^*)] = [\Delta_{\Gr(2,V)}^{2}]. \qedhere
\]
\end{example}

\begin{example}\label{ex:gr36}
 Returning to the example in the introduction, consider a vector space $V$ with standard basis $\{e_{1},\ldots, e_{6}\}$ and subspace $E = \text{span}\{e_1,e_2,e_3\}$. The $G= \GL(V)$-orbit of  $\bw3E = e_{1}\wedge e_{2}\wedge e_{3}$ in $\PP \bw 3 V$ is the Grassmannian $\Gr(3,6)\subset \PP \bw3V$, with affine tangent space
\[
\widehat T_{E}\Gr(3,V) = \bw{3}E \oplus \left( E^{*} \otimes V/E \right)
.\]

Now consider a splitting $V  = V_{1} \oplus V_{2}\oplus V_{3}$ with $V_{i} = \langle e_{i}, e_{i+3}\rangle $ for $i=1,2,3$. The $\SL(V)$-module  $\bw{3} V$ splits as an $\SL(V_{1}) \times \SL(V_{2})\times \SL(V_{3})$-module as
\[
\bw{3} V = \left( \bigoplus_{1\leq i\neq j \leq 3  }\bw{2}V_{i}\otimes V_{j}  \right) \oplus \left( V_{1}\otimes V_{2}\otimes V_{3} \right),
\]
where we identify $\wedge$-products and $\otimes$-products of complementary spaces, and do not include $\bw{3}V_i$ since it is zero when $\dim V_i = 2$.  
The intersection of $\Gr(3,V)$ with $\PP(V_{1}\otimes V_{2}\otimes V_{3})$  is the Segre variety $\Seg( \PP V_{1} \times \PP V_{2}  \times \PP V_{3})$, a smooth homogeneous variety $G/P$.

This projection of $\Gr(3,6)$ may be interpreted as the flatlander's trifocal variety \cite{AST,Oeding_quad} or as the variety of principal minors of a $3\times 3$ matrix \cite{LinSturmfels}. In either case  the projection coincides with the entire ambient space $\PP(V_{1}\otimes V_{2}\otimes V_{3})$. So again we cannot apply Proposition~\ref{prop:gkz}, hence we use Theorem~\ref{thm:tan} instead.

We check the tangency condition. If we further decompose $\widehat T_{E}\Gr(3,V)$ as a module for the reductive part of $P$ we get 
$e_{1}\wedge e_{2}\wedge e_{3} \oplus \langle e_{i_{1}}\wedge e_{i_{2}} \wedge e_{k} \mid 1\leq i_{1} \neq i_{2} \leq 3, \;\;4\leq k \leq 6
 \rangle = $
{\begingroup
\scriptspace=-0.75pt
\begin{align*}
%& e_{1}\wedge e_{2}\wedge e_{3} \oplus
% \langle
%e_{i_{1}}\wedge e_{i_{2}} \wedge e_{k} \mid 1\leq i_{1} \neq i_{2} \leq 3, \;\;4\leq k \leq 6
% \rangle
%\\
&=\langle
e_{1}\otimes e_{2}\otimes e_{3},\;
e_{1}\otimes e_{2}\otimes e_{6},\;
e_{1}\otimes e_{5}\otimes e_{3},\;
e_{4}\otimes e_{2}\otimes e_{3}
\rangle
\\
&\;\;\;\;\oplus
\langle
e_{6}\otimes e_{2}\otimes e_{3},\;
e_{5}\otimes e_{2}\otimes e_{3},\;
e_{1}\otimes e_{6}\otimes e_{3},\;
e_{1}\otimes e_{4}\otimes e_{3},\;
e_{1}\otimes e_{2}\otimes e_{5},\;
e_{1}\otimes e_{2}\otimes e_{4}
\rangle
\\&=
\underline{\left( e_{1}\otimes e_{2}\otimes e_{3} \right)}
\underline{\oplus
\left(V_{1}/e_{1} \otimes e_{2} \otimes e_{3} \right)\oplus
\left(e_{1} \otimes V_{2}/e_{2} \otimes e_{3} \right)\oplus
\left(e_{1} \otimes e_{2} \otimes V_{3}/e_{3} \right)
}
\\
&\;\;\;\; \oplus
\left(\big(V_{2}/e_{2} \oplus V_{3}/e_{3}\big) \otimes e_{2} \otimes e_{3}\right) \oplus
\left(e_{1} \otimes \big(V_{1}/e_{1} \oplus V_{3}/e_{3}\big) \otimes e_{3} \right)\oplus
\left(e_{1} \otimes e_{2} \otimes \big(V_{1}/e_{1}  \oplus V_{2}/e_{2} \big) \right)
.
\end{align*}
\endgroup
}
\hspace{-3pt}Note that $\widehat T_{e_{1}\otimes e_{2}\otimes e_{3}}\Seg (\PP V_{1}\times \PP V_{2} \times \PP V_{3})$ is a submodule (underlined), and that, moreover, the projection to $V_{1}\otimes V_{2}\otimes V_{3}$ is precisely this tangent space. Thus Theorem~\ref{thm:tan} applies, and (by a dimension calculation) we see that $\Delta _{\PP^{1}\times \PP^{1}\times \PP^{1}}$ divides the restriction of $\Delta_{\Gr(3,6)}$ to 3-mode binary tensors.
To look for more factors, we could compute the projection rank loci. However, the projection is $G = \GL(V_1)\times \GL(V_2) \times \GL(V_3)$-invariant and the Segre is the only $G$-variety in  $\PP (V_1\otimes V_2 \otimes V_3)$ whose dual is a hypersurface. So, no other factor can contribute. Further work (a degree comparison) can show that the multiplicity is one. 

Further identify $U = V_{1} = V_{2} = V_{3}$ to get a copy of $S^{3}U$ inside of $V_{1}\otimes V_{2}\otimes V_{3}$.
Note that the image of a general tangent space
$
\widehat T_{u_{1}\otimes u_{2}\otimes u_{3} }\Seg (\PP U^{\times 3})
$
under the projection to $S^{3}U$ is a tangent space of the Chow variety $\Chow_{1,1,1}\PP^{1}$, however, since every bivariate homogeneous form is completely decomposable, $\Chow_{1,1,1}\PP^{1} = \PP^{2}$, and we don't obtain any divisibility.

Again we cannot apply Proposition~\ref{prop:gkz}, but we can look at when the projection of
$
\widehat T_{u_{1}\otimes u_{2}\otimes u_{3} }\Seg (\PP U^{\times 3})
$ drops rank and check the tangency condition. 
First, when the lines $[u_{i}]$ are mutually distinct yields
$\Chow_{1,1,1}\PP^{1} = \PP^{2}$, already discussed. When two of the lines coincide the projection drops rank, and one obtains $\Chow_{2,1}\PP^{1}$ as a rank locus. However, its dual is not a hypersurface (\cite[Thm.~1.3]{oeding_hyperdet}), and thus does also not contribute to the divisibility.  
Finally if we have a coincidence of lines $[u_{1}] = [u_{2}] = [u_{3}]$, the resulting tangent space projects onto the tangent space of a point of the Veronese variety $\nu_{3}\PP U$. Hence the Veronese satisfies the tangency condition and Theorem~\ref{thm:tan} applies, so we see that $\Delta _{\nu_{3}\PP^{1}}$ divides  $\Res(\Delta_{\PP^{1}\times \PP^{1}\times \PP^{1}}, S^3 \CC^2)$. 
Since all these invariants have degree 4, the division relations are equalities (up to scalar). 
\end{example}

\begin{example}\label{ex:curve}
The following example was suggested by a referee. It is nice in that is not so symmetric, illustrating further features of what can happen to the restriction of the dual. Our computations are performed in Macaulay2 \cite{M2}, aided by the package \texttt{Resultants} \cite{Stagliano}.
Let $C$ denote the smooth curve in $\PP^3$ respectively with ideal and discriminant 
\[\mathcal{I}(C) = \langle x z-y q, z^{3}-x q^{2}-z q^{2}, y z^{2}-x^{2}q-y q^{2}, x^{3}-y^{2}z+x y q \rangle \subset \CC[x,y,z,q]
,\] 
\[\begin{smallmatrix}
\Delta_C = -4 x^{6}+8 x^{4}y^{2}-4 x^{2}y^{4}+12 x^{5}z-52 x^{3}y^{2}z+8 x y^{4}z-12 x^{4}z^{2} 
 +107 x^{2}y^{2} z^{2}-4 y^{4}z^{2}+4 x^{3}z^{3}-90 x y^{2}z^{3}+27 y^{2}z^{4} \\
 -12 x^{4}y q+60 x^{2}y^{3}q-16 y^{5}q+6 x^{3}y z q-208 x y^{3}z q+6 x^{2}y z^{2}q 
 +144 y^{3}z^{2}q+27 x^{4}q^{2}-48 x^{2}y^{2}q^{2}+128 y^{4}q^{2}+192 x y^{2}z q^{2}-256 y^{3}q^{3}.
 \end{smallmatrix}\]
The projection of $C$ from $Q:=\{[0:0:0:1]\}$ is the nodal curve $N$ with ideal $\langle x^{3}+x^{2}z-y^{2}z \rangle $. The jacobian of $N$ has the following primary decomposition:
\[
\langle y,\,x \rangle
 \cap
  \langle z^{2},\,y\,z,\,3\,y^{2}+2\,x\,z,\,3\,x^{
       2}+2\,x\,z
       \rangle
.\]
Since the second ideal has support $[0:0:0]$ we conclude that $N$ is smooth away from the node  $n :=[0:0:1] \in N \subset \PP^2$. 
The dual variety $N^\vee$ in $\PP^2$ has discriminant
\[
\Delta_N = -4x^{4}+8x^{2}y^{2}-4y^{4}+4x^{3}z-36xy^{2}z+27y^{2}z^{2}.
\]
The restriction of $\Delta_C$ to the space $q=0$ factors as 
\[
\Res(\Delta_C, \{q=0\}) = \left(x-z\right)^{2}\left(-4 x^{4}+8 x^{2}y^{2}-4 y^{4}+4 x^{3}z-36 x y^{2}z+27 y^{2}z^{2}\right).\]
Let us compute the projection rank loci. Since $N$ is the projection of $C$, $N$ satisfies the tangency condition by Corollary~\ref{cor:proj}.  The projection drops rank when $\widehat T_pC$ intersects the center of projection. The tangent space of $C$ at $p$ is the kernel of the jacobian $JC_p$. So we can compute the projection rank locus by solving the system of equations:
$JC_p \cdot Q = 0$. We obtain an ideal with primary decomposition
\[
\langle q,\,y,\,x^{2}\rangle\,\cap \, \langle y,\,x+z,\,z^{2}\rangle.
\]
These primary components intersected with $C$ are supported only at the cone point and at $p_0 = [0:0:0:1]$. 
So the rank locus consists only of $\pi_Q \widehat T_{p_0}C \cap N$. Of course the center of projection $Q$ does not project into $N$, but we can still get a rank locus. Let $\langle \rangle$ denote the column space when surrounding a matrix. We compute
\[\widehat T_{p_0}C=  
\left \langle
\left(\begin{smallmatrix}
       \vphantom{\left\{-1\right\}}-1&0\\
       \vphantom{\left\{-1\right\}}0&0\\
       \vphantom{\left\{-1\right\}}1&0\\
       \vphantom{\left\{-1\right\}}0&1\\
       \end{smallmatrix}\right)\right \rangle 
\quad \text{and} \quad 
\pi_Q \widehat T_{p_0}C = 
\left \langle \left(\begin{smallmatrix}
       \vphantom{\left\{-1\right\}}-1\\
       \vphantom{\left\{-1\right\}}0\\
       \vphantom{\left\{-1\right\}}1\\
       \end{smallmatrix}\right)
\right \rangle .\]
So  $\widehat T_{p_0}C$ projects to the point $e = [1:0:-1] \in N$ and the rank drops. Note also that 
\[\widehat T_e N = 
\left \langle
 \left(\begin{smallmatrix}
       0&-1\\
       1&0\\
       0&1\\
       \end{smallmatrix}\right)
\right \rangle        
       .\]
Now consider the point $e$ as a variety, denoted $E$ for disambiguation. Since $\pi_Q \widehat T_{p_0} C = \widehat  T_eE$, we see that $E$ satisfies the tangency condition, and  hence the dual $E^\vee$, which is equal to the line $\V(x-z)$, must contribute to the restriction of the restriction of the dual of $C$.  Now lift $\widehat T_eE$ to  $\hat e = [x:0:-x:q]$, which is equal to $\widehat T_{p_0}C$. Note that $\hat e$ intersects $C$ only when $x =0$, i.e. at the point $ p_0$.  Moreover, the ideal of $C$ restricted to $\hat e$ becomes $\langle  -x^{2},\,-x^{3},\,-x^{2}q,\,x^{3} \rangle$, hence the line $\hat e$ osculates $C$ to order 2 at $p_0$. 
Therefore all hyperplanes in $\PP^2$ that are tangent to $E$ at $e$ must lift to hyperplanes in $\PP^3$ that are tangent to order $2$ to $C$ at $p_0$ , and hence $x-z$ must divide the restriction twice.

In addition to $p_0$ we're interested in what happens above the node, namely at
\[
p_{1} = [0:0:1:1],\quad p_{-1} = [0:0:-1:1] .\]
The respective tangent cones $\widehat{T_{p_i}}C$ are the column spaces of the following matrices.
\[
 \widehat T_{p_1}C= 
\left \langle
\left(\begin{smallmatrix}
       \vphantom{\left\{-1\right\}}2&0\\
       \vphantom{\left\{-1\right\}}2&0\\
       \vphantom{\left\{-1\right\}}1&1\\
       \vphantom{\left\{-1\right\}}0&1\\
       \end{smallmatrix}\right)
       \right \rangle
       , \quad
 \widehat T_{p_{-1}}C= 
\left \langle
 \left(\begin{smallmatrix}
       \vphantom{\left\{-1\right\}}2&0\\
       \vphantom{\left\{-1\right\}}-2&0\\
       \vphantom{\left\{-1\right\}}1&-1\\
       \vphantom{\left\{-1\right\}}0&1\\
       \end{smallmatrix}\right)        \right \rangle. 
\]
Note that  $p_1$ and $p_{-1}$ of $C$ project to $n$, and moreover the tangent lines near $n$  approach the projections   $\pi_Q \widehat T_{p_1}C$ and $\pi_Q \widehat T_{p_{-1}}C$. These projections don't drop rank, however. 
\end{example}

%%%%%%%%%%%%%%%%%%%%%%%%%%%%%%%%%%%%%%%%%%%%%
\section{Restrictions of discriminants of Lie algebras}\label{sec:lie}
We establish division relations based on Theorem~\ref{thm:tan} between different equations of dual varieties. Those relations will allow us to get new explicit equations of duals.

Recall that for $\g$ a semi-simple Lie algebra, the \emph{adjoint variety}, denoted $X_G^{\text{ad}}$, is the projectivization of the highest weight orbit in $\g$ for the adjoint action of the Lie group $G$,
\begin{equation}
 X_G^{\text{ad}}=\PP(G.v)\subset \PP \g,
\end{equation}
with $v$ a highest weight vector of $\g$.
The adjoint variety $X_G^{\text{ad}}$ is the unique closed orbit for the adjoint action on $\PP \g$. 
More generally suppose $V_\lambda$ is an irreducible $G$-module with highest weight $\lambda$, and highest weight vector $v_\lambda \in V_\lambda$. Consider the \emph{homogeneous variety} $G.v_\lambda  = G/P$, where $P$ is the stabilizer of $v_\lambda \in V_\lambda$. 
Then \cite[Claim~23.52]{FultonHarris} says that $G/P$ is the unique closed orbit of $G$ acting on $V_\lambda$. Duals of adjoint varieties are hypersurfaces \cite{TevelevJMS}, and as such, are defined by a single (up to scale) irreducible homogeneous polynomial,  the $G$-\emph{discriminant}, denoted $\Delta_G$ (instead of $\Delta_{X^{\text{ad}}_G}$). 

Suppose $\g$ is equipped with a $\ZZ_k$-grading 
\begin{equation}\label{eq:grading}
\g=\g_0\oplus \g_1\oplus\dots\oplus \g_{k-1}
\end{equation} and $\g_0$ is such that $\text{Exp}(\g_0) =:G_0$ is a connected component containing the identity of $G$ so that $G_{0}$ is the (Lie) subgroup of $G$ with Lie algebra $\g_{0}$. Since $\g_0$ acts on $\g$ and preserves the grading (i.e. $[\g_0,\g_i] \subset \g_i$) then \eqref{eq:grading} is a $G_0$-module decomposition. One can establish relations between duals of $G_0$-orbits and the restriction of the $G$-discriminant by Theorems~\ref{thm:tan}. 

\begin{theorem} \label{thm:g0}
Use the notation above. Suppose $ \PP \g_s \cap X^\text{ad}_G \neq \emptyset$ and let $[v_\lambda] $ be one such point of this intersection. The respective homogenous varieties $Y = G_0.[v_\lambda] \subset \PP \g_s$ and $X^\text{ad}_{G}  = G. [v_\lambda] \subset \PP \g$ satisfy the tangency condition, hence: 
$Y^\vee \subset (X^\text{ad}_G)^\vee \cap \PP (\g_s^*)$. If $Y^\vee$ is a hypersurface, then $(\Delta_{Y})^m \mid \Res(\Delta_{X^\text{ad}_G}, \g_s)$, for some integer $m>0$.
\end{theorem}
\begin{proof}
Let $A = \g_s$ with $s\neq 0$ and  $B=\bigoplus_{i\neq s}\g_i$.
By assumption we have $0\neq v_\lambda \in  \g_s \cap \widehat X^\text{ad}_G $. 
Because $X^\text{ad}_G$ is a highest weight orbit we may choose a set of positive roots of $G$, which also fixes the positive roots of $G_0$, so that $v_\lambda$ is a highest weight vector of $\g$ as a $G$-module, and therefore a highest weight vector of $\g_s$ as a $G_0$-module.  Hence $Y$ is homogenous by \cite[Claim~23.52]{FultonHarris}, and in particular, it is closed. 

Because $Y$ is homogeneous, it suffices to verify the tangency condition for $Y$ at $v_\lambda$. Notice that the grading \eqref{eq:grading} is a model for the Lie algebra $\g$,
in particular the brackets with elements of the Lie algebra $\g_0$ define a $\g_0$-action on each $\g_s$. So the tangent space of $G_0.v_\lambda$ is computed by the bracket with the Lie algebra $\g_0$:
\[
\widehat T_{v_\lambda} Y = [\g_0,v_\lambda] \subset \g_s = A,
\]
where $[,]$ denotes Lie bracket. The tangent space upstairs respects the grading: 
\begin{equation}\label{eq:tx}
\widehat T_{v_\lambda} X^\text{ad}_G =  [\g,v_\lambda]= [\g_0,v_\lambda] \oplus \bigoplus_{i =1}^{k-1} [\g_i,v_\lambda].
\end{equation}
By the grading, the sum is a direct sum and the only part that intersects $\g_s = A$ is the first one listed on the RHS of \eqref{eq:tx}.
Hence the projection is precisely $ \pi_B(\widehat T_{v_\lambda} X^\text{ad}_G) = \widehat T_{v_\lambda} Y$, thus $Y$ satisfies the tangency condition. The remainder of the conclusions are from Corollary~\ref{thm:tan}.
\end{proof}
%%%%%%%%%%%%
\begin{example}\label{ex:e8}
Consider the Lie algebra $\mathfrak{e}_8$ with the following $\ZZ_3$-grading (see \cite{Vinberg-Elasvili, Katanova}):
\[
 \mathfrak{e}_8=\bw3 V^*\oplus \sl_9\oplus \bw3 V = \g_{-1}\oplus \g_{0}\oplus \g_{1},
\]with $\dim V = 9$. 
Let $E=e_1\wedge e_2\wedge e_3 \in \bw3 V$.  $E$ is nilpotent in $\mathfrak{e}_8$ and a calculation on the roots shows that $\text{dim}([\mathfrak{e}_8,E])=58$, i.e., $\PP(\text{E}_8.E)$ is a nilpotent orbit of dimension $57$ and thus corresponds to $X_{\text{E}_8}^{\text{ad}}$. In particular, $0\neq E \in \widehat X^\text{ad}_{\text{E}_8} \cap \g_1$. 
The $G_0=\SL(V)$-orbit of $E$ in $\PP(\bw3 V)$ is the Grassmannian $\Gr(3,9) = \Gr(3,V) \subset \PP(\bw3 V)$. Then, by Theorem~\ref{thm:g0} we obtain the divisibility relation $ \Delta_{\Gr(3,9)} \mid \Res(\Delta_{\text{E}_8},\g_1^*)$. 
\end{example}
%%%%%%%%%%%%
\begin{example}\label{ex:e7}
Consider $\mathfrak{e}_7$ with the following $\ZZ_2$-grading (\cite{Katanova}):
\[
  \mathfrak{e}_7=\mathfrak{s}\mathfrak{l}_8 \oplus \bw 4 \CC^8 =: \g_0 \oplus \g_1.
 \]
Restrict $\Delta_{\text{E}_7}$ to $\g_1^*$. Set $E =e_1\wedge e_2\wedge e_3\wedge e_4$.  Note that $G_0.[E]=\Gr(4,8) \subset \PP \g_1$ and check like in Example~\ref{ex:e8} that $[E] \in X_{\text{E}_7}^{\text{ad}}\cap \PP(\bw 4 \CC^8)$.
 By Theorem~\ref{thm:g0} and comparing degrees, $\text{deg}((X_{\text{E}_7}^{\text{ad}})^\vee)=126$ \cite{TevelevJMS} and $\text{deg}\Gr(4,8)^\vee=126$ \cite{LascouxDegreeDualGrassmannian},  we conclude
\begin{equation}
[ \Delta_{\Gr(4,8)}]  = [\Res(\Delta_{\text{E}_7},\bw 4{\CC^8}^*)].
\lqedhere
\end{equation}\label{eq:e7}
\end{example}
%%%%%%%%%%%%
\begin{example}\label{ex:e6}
 Consider $\mathfrak{e}_6$ with the following $\ZZ_3$-grading:
\[
  \mathfrak{e}_6=((\CC^3)^*\otimes (\CC^3)^*\otimes (\CC^3)^*)\oplus (\sl_3\oplus\sl_3\oplus\sl_3)\oplus( \CC^3\otimes\CC^3\otimes \CC^3) =:\g_{-1} \oplus \g_0 \oplus \g_1.
  \]
Restrict $\Delta_{\text{E}_6}$ to $(\CC^3\otimes\CC^3\otimes \CC^3)^* = \g_1^*$. 
Set $E  =e_1\otimes e_1 \otimes e_1$ and check that $[E] \in X^\text{ad}_{\text{E}_6} \cap \g_1$. 
Note that $\Seg(\PP^2\times\PP^2\times \PP^2) = G_0. [E] \subset \PP \g_1$.  By Theorem~\ref{thm:g0} 
\[\Delta_{\PP^2 \times \PP^2 \times \PP^2} \mid \Res(\Delta_{\text{E}_6},(\CC^3\otimes\CC^3\otimes \CC^3)^*). \qedhere
\]
\end{example}
\begin{example}[\cite{holweck_4qubit2}]\label{ex:so8}
Consider $\so_8$ with the following $\ZZ_2$-grading:
\[
  \so_8=\left(\sl_2\oplus \sl_2\oplus \sl_2\oplus \sl_2\right)\oplus \CC^2\otimes\CC^2\otimes \CC^2 \otimes \CC^2.
 \]
Restrict $\Delta_{\SO_8}$ to $((\CC^2)^{\otimes 4})^* = \g_1^*$. Set $E = e_1\otimes e_1 \otimes e_1\otimes e_1$ and check that   $[E] \in X^\text{ad}_{\text{SO}_8} \cap \g_1$. 
Note that $\Seg(\PP^1\times\PP^1\times \PP^1\times \PP^1) = G_0.[E] \subset \PP \g_1 $.  By Theorem~\ref{thm:g0} and checking degrees,
 $\text{deg}((X_{\SO(8)}^{\text{ad}})^\vee)=24$ and $\text{deg}((\Seg(\PP^1\times\PP^1\times\PP^1\times\PP^1))^\vee)=24$, we obtain
\[
[ \Delta_{\PP^1\times\PP^1\times\PP^1\times \PP^1}] = [ \Res(\Delta_{\SO(8)},(\CC^2)^{\otimes 4 *})].\qedhere\]
\end{example}

A general philosophy of this paper is to show that new discriminants and some well-known nontrivial hyperdeterminants can be derived by restricting the $\text{E}_8$ discriminant, and as such, they are a flavor of sparse resultant.
 Figure~\ref{fig:table} gives a summary, where we use $\big|$ to denote ``divides,'' and  $\Gamma^{\text{ad}}_G$ to denote the adjoint representation of $G$.

\begin{figure}[!h]
\resizebox{1\linewidth}{!}{
$\xymatrix{\Delta_{\text{E}_8} \ar[r] \ar[d]
& \Delta_{\text{E}_7} \big| \Res(\Delta_{\text{E}_8},(\Gamma^{\text{ad}}_{\text{E}_7})^*) \ar[r]\ar[d]
& \Delta_{\text{E}_6} \big|  \Res(\Delta_{\text{E}_7},(\Gamma_{\text{E}_6}^{\text{ad}})^*)\ar[d]\ar[r] 
& \Delta_{\SO(8)} \big| \Res(\Delta_{\text{E}_6},(\Gamma_{\SO_8}^{\text{ad}})^*) \ar[d]\\
  \Delta_{\Gr(3,9)} \big| \Res(\Delta_{\text{E}_8},((\bigwedge^{\hspace{-.2em}^3} \CC^9)^*)) \ar[d]
  & \Delta_{\Gr(4,8)} \big| \Res(\Delta_{\text{E}_7},(\bigwedge^{\hspace{-.2em}^4}  \CC^8)^*) \ar[d]
  &  \Delta_{(\PP^2)^{\times 3}} \big|\Res(\Delta_{\text{E}_6},(\CC^3)^{\otimes 3*})\ar[d]^{\pi_{(S^3(\CC^3)^*)^\perp}} 
  &\Delta_{(\PP^1)^{\times 4}}\big|\Res(\Delta_{\SO(8)},(\CC^2)^{\otimes 4*})\ar[d]\\
 \Delta_{(\PP^2)^{\times3}}\big|\Res(\Delta_{\Gr(3,9)},(\CC^3)^{\otimes 3*}) \ar[d]
 & \Delta_{(\PP^1)^{\times 4}} \big| \Res(\Delta_{\Gr(4,8)},(\CC^2)^{\otimes 4*}) \ar[d]
 & \Delta_{v_3(\PP^2)} \big| \Res(\Delta_{(\PP^2)^{\times3}},S^3(\CC^3)^*) 
 & \Delta_{v_4(\PP^1)} \big| \Res(\Delta_{(\PP^1)^{\times 4}},S^4(\CC^2)^*)\\
 \Delta_{v_3(\PP^2)} \big|\Res(\Delta_{(\PP^2)^{\times3}},S^3(\CC^3)^*)
 &\Delta_{v_4(\PP^1)} \big| \Res(\Delta_{(\PP^1)^{\times 4}},S^4(\CC^2)^*)}
 $
 }
\caption{Division relations for a sequence of discriminants starting from the $\text{E}_8$-discriminant. The first row comes from the inclusion $\so_8\subset \e_6\subset \e_7\subset \e_8$, and an application of \cite[Thm~2.5]{TevelevJMS}. The second row comes from Examples~\ref{ex:e8}, \ref{ex:e7}, \ref{ex:e6}, and \ref{ex:so8}. The last two rows will be explicitly given in Section~\ref{sec:gr39gr48}.}\label{fig:table}
\end{figure}

\section{Expressions of the restriction of $\text{E}_n$-discriminants to semi-simple elements}\label{sec:semi-simple}
{In this Section we restrict the discriminants $\Delta_{\text{E}_8}, \Delta_{\text{E}_7}$, and $\Delta_{\text{E}_6}$ to the semi-simple elements of a specific $G$-module when $G$ is a subgroup of $\text{E}_8$, $\text{E}_7$, or $\text{E}_6$.} Thanks to the Killing form one identifies $\g\simeq\g^*$ and thus the discriminants$ \Delta_{\text{E}_n}$, for $n=6,7,8$ are elements of $\CC[\mathfrak{e}_n]^{\text{E}_n}$.
Recall the Chevalley Restriction Theorem that $\CC[\g]^{G} = \CC[\mathfrak{h}]^{W}$, where $\g$ is a complex semi-simple Lie algebra associated with the Lie group $G$,  $\mathfrak{h} \subset \g$ is a Cartan subalgebra and $W$ is the Weyl group.
The \emph{Jordan-Chevalley decomposition} (or \emph{Jordan decomposition} for short) of an element $x \in \g$ is a unique decomposition of form $x=s+n$, with $s$  semi-simple and $n$ nilpotent. We will fix a Cartan subalgebra $\mathfrak{h}$ with basis $t_{1},\ldots, t_{n}$ and declare a \emph{generic semi-simple form} to be $\mathscr{s} = \sum_{i}s_{i}t_{i}$, for parameters $s_{i}$ not all zero. 
Our interest in expressing invariants on a generic semi-simple form $\mathscr{s}$ is the following.
The fundamental invariants, being continuous, take the same value on $s+n$ as they do on $s$. This was noted in \cite[Section~3.1]{Vinberg-Elasvili} and stated more explicitly in \cite[Prop.~2.2]{SarosiLevay}. Thus, we can evaluate the invariants on a generic semi-simple form, as this is essentially no loss of information for computations in the invariant ring, and moreover, the formulas become very nice.
For instance, Tevelev showed that the discriminant of the unique closed adjoint orbit restricted to $\mathfrak{h}$ is the product of the roots \cite[Thm~2.5]{TevelevJMS}. 

Finally, we will be interested in expressing hyperdeterminants and discriminants in terms fundamental invariants because this will allow for evaluation without computing a Jordan-Chevalley decomposition. To determine the expression of an invariant as a polynomial in fundamental invariants, it suffices to work on a basis of semi-simple elements and work on a generic form that parametrizes a non-trivial open subset in that Cartan subalgebra.%
\begin{prop}\label{prop:up}
Suppose $\g$  is  a complex semi-simple Lie algebra with fixed Cartan subalgebra $\mathfrak{h}$. Let $\mathscr{s}\in \mathfrak{h}$ denote a generic semi-simple form. Suppose $f_{1},\ldots f_{s} $ is a $G$-invariant generating set of the invariant ring $\CC[V]^{G}$. If $h\in \CC[V]^{G}$ is an invariant polynomial and $h(\mathscr{s})= \sum_{I} a_{I}f^{I}(\mathscr{s})$
 then
$
h = \sum_{I} a_{I}f^{I}
$.
\end{prop}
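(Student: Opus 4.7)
The plan is to show that the $G$-invariant polynomial $g := h - \sum_I a_I f^I \in \CC[V]^G$ vanishes identically on $V$, by invoking the Chevalley Restriction Theorem (or its Vinberg--Kac analogue when $V$ is a graded piece of a theta-representation).

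First, I read the hypothesis $h(\mathscr{s}) = \sum_I a_I f^I(\mathscr{s})$ as an identity of polynomials in the indeterminates $s_1,\ldots,s_n$ that parametrize $\mathscr{s} = \sum_i s_i t_i$: both sides depend polynomially on the $s_i$, so the equality at the generic semi-simple form is precisely the statement that the restriction $g|_{\mathfrak{h}}$ is the zero element of $\CC[\mathfrak{h}]$. (Equivalently, if one prefers to view $\mathscr{s}$ as a single generic point, then $g|_{\mathfrak{h}}$ vanishes on a Zariski-dense subset of $\mathfrak{h}$, hence identically.)

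Next, I invoke the restriction isomorphism $\rho \colon \CC[V]^G \xrightarrow{\sim} \CC[\mathfrak{h}]^W$, which is Chevalley's theorem in the classical case $V = \mathfrak{g}$ (adjoint action), and its Vinberg--Kac analogue when $V$ is one of the graded summands $\mathfrak{g}_k$ of a $\ZZ_k$-graded Lie algebra (with $\mathfrak{h}$ the corresponding Cartan subspace of $V$ and $W$ the little Weyl group). Since $\rho$ is injective, $\rho(g) = g|_{\mathfrak{h}} = 0$ forces $g = 0$ in $\CC[V]^G$, which is the desired equality $h = \sum_I a_I f^I$.

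The only delicate point is matching the correct flavor of the restriction theorem to the representation at hand, as in the applications of this paper to graded pieces such as $\bw{3}\CC^9$ and $\bw{4}\CC^8$; once this is in place, the proof reduces to a one-line application of injectivity, and no further computation is required.
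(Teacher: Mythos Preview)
Your proof is correct and follows essentially the same route as the paper: both arguments reduce the identity to $\mathfrak{h}$ via the genericity of $\mathscr{s}$ and then invoke the Chevalley Restriction Theorem to lift it back to $\CC[V]^{G}$. Your explicit mention of the Vinberg--Kac analogue for graded pieces is a welcome clarification, since the paper's applications are to $V = \bw{3}\CC^{9}$ and $V = \bw{4}\CC^{8}$ rather than to the full adjoint representation.
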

\begin{proof}
This follows from the fact that $\mathscr{s}$ parametrizes a dense open subset of $\mathfrak{h}$, so $h(\mathscr{s})= \sum_{I} a_{I}f^{I}(\mathscr{s})$ implies that this formula holds in $\CC[\mathfrak{h}]^{W}$, and the Chevalley Restriction Theorem implies that the formula holds in  $\CC[\g]^{G}$. 
\end{proof}

\subsection{Restricting  $\Delta_{\text{E}_{8}}$ to semi-simple elements of $\bigwedge^3 \CC^9$}
Our basic reference is \cite{Vinberg-Elasvili}. 
We will use Tevelev's result that the restriction of $\Delta_{\text{E}_8}$ on a Cartan subalgebra of $\mathfrak{e}_8$ is the product of the roots: $\Res(\Delta_{\text{E}_8},\mathfrak{h})=\prod_{\alpha \in R} \alpha$, where $R$ is the set of roots of $\mathfrak{e}_8$ \cite[Thm~2.5]{TevelevJMS}.
Again, we will use the $\ZZ_3$-grading of the exceptional Lie algebra $\mathfrak{e}_{8}$ in \cite{Vinberg-Elasvili}:
\[
\mathfrak{e}_{8} = \bw{3}V^{*}\oplus \mathfrak{sl}(V)\oplus \bw{3}V = \g_{-1} \oplus \g_{0}\oplus \g_{1}
.\]
A choice of a Cartan subalgebra of $\mathfrak{e}_8$ induces weights. A standard construction that respects the $\ZZ_3$ grading is as follows.
Let $e_i$ denote a basis of $V$. Define \emph{weights} $\varepsilon_{i}$ by $\varepsilon_{i}(e_{j}) = \delta_{i,j}$. 
Define $\varepsilon$ to be additive over tensor products and invariant under non-zero scalar multiplication.
The induced basis vectors for $\bw{3}V$ are $e_{ijk} := e_{i}\wedge e_{j}\wedge e_{k}$, which have weights $\varepsilon_{i}+ \varepsilon_{j}+ \varepsilon_{ k}$. 

Let $\mathfrak{h}\subset  \g_{0}$ be the standard Cartan subalgebra of traceless diagonal matrices in $\sl(V)$. In bases, $\mathfrak{h}$ is the hyperplane  $\sum_{i}e_{i}\otimes e^{i}=0$ in the span of $e_{1}\otimes e^{1}, e_{2}\otimes e^{2},\ldots,e_{9}\otimes e^{9}$.

Contraction with the standard volume form $\Omega = e_1\wedge \cdots \wedge e_9$ gives the duality $\bw{k}V \cong \bw{9-k}V \cong \bw{k}V^*$. Requiring the volume form to have weight zero implies $\sum \varepsilon_i =0$, hence dualizing $k$-forms negates their weights. Let $e^i$ denote the dual basis vectors of $V^*$ of weights $-\varepsilon_i$. The induced dual basis vectors $e_{ijk}^*$ for $\bw{6} V \cong \bw{3}V^*$ have weights $-(\varepsilon_{i}+ \varepsilon_{j}+ \varepsilon_{ k})$. 
The standard basis $e_i\otimes e^j$ on $V\otimes V^*$ gives a weight basis on $\sl(V)$ with weights $\varepsilon_i-\varepsilon_j$ (the basis vectors on the diagonal have weight $0$). 
The $\text{E}_8$ root system is
\[\Sigma = \{ \varepsilon_{i}-\varepsilon_{j},  \pm(\varepsilon_{i}+\varepsilon_{j}+\varepsilon_{k}) \} \quad (i,j,k \text{ distinct}).
\]

Now we construct an alternative Cartan subalgebra of $\mathfrak{e}_8$.
Let $\mathfrak{s}_{1} \subset \g_{1}$ be spanned by
\begin{align}\notag
p_1=e_{123}+e_{456}+e_{789},
&\phantom{\quad }& p_2=e_{147}+e_{258}+e_{369},\\
p_3=e_{159}+e_{267}+e_{348}, & &
p_4=e_{168}+e_{249}+e_{357}.\label{eq:ssb39}
\end{align}
Let $p_i^*$ denote the dual vector to $p_i$, obtained by contraction with $\Omega$. 
 The following set of \emph{basic semi-simple elements} of $\mathfrak{e}_{8}$,
\[
\{p_{i}^{*},p_{i}\mid 1\leq i\leq 4\}, 
\]
 forms a basis of a maximally commuting subalgebra $\mathfrak{s}_{-1}\oplus  \mathfrak{s}_{1} = \mathfrak{s} \subset \mathfrak{e}_{8}$.
 An open set of semi-simple elements in $\mathfrak{s}_{1} \subset \bw{3}V = \g_{1}$ is given by the generic form
\[p = z_{1}{p}_{1}+ z_{2}{p}_{2}+ z_{3}{p}_{3}+ z_{4}{p}_{4},
\]
for parameters $z_{i}$ not all zero.
 Let $\omega = e^{2i\pi/3 }$.
 The $240$ roots of $\text{E}_8$ restrict to $\mathfrak{s}_{1}$ via
\[\begin{smallmatrix}
3\,\varepsilon_{{1}}(p)&=&\phantom{\omega}z_{1}+\phantom{\omega}z_{2}+\phantom{\omega}z_{3}+\phantom{\omega}z_{4},&&
3\,\varepsilon_{{2}}(p)&=&\phantom{\omega}z_{1}+\omega z_{2}+\omega z_{3}+\omega z_{4}, &&
3\,\varepsilon_{{3}}(p)&=&\phantom{\omega}z_{1}+{\overline \omega}z_{2}+{\overline \omega}z_{3}+{\overline \omega}z_{4}, \\
3\,\varepsilon_{{4}}(p)&=&\omega z_{1}+\phantom{\omega}z_{2}+{\overline \omega}z_{3}+\omega z_{4},&&
3\,\varepsilon_{{5}}(p)&=&\omega z_{1}+\omega z_{2}+\phantom{\omega}z_{3}+{\overline \omega}z_{4},&&
3\,\varepsilon_{{6}}(p)&=&\omega z_{1}+{\overline \omega}z_{2}+\omega z_{3}+\phantom{\omega}z_{4},\\
3\,\varepsilon_{{7}}(p)&=&{\overline \omega}z_{1}+\phantom{\omega}z_{2}+\omega z_{3}+{\overline \omega}z_{4},&&
3\,\varepsilon_{{8}}(p)&=&{\overline \omega}z_{1}+\omega z_{2}+{\overline \omega}z_{3}+\phantom{\omega}z_{4}, &&
3\,\varepsilon_{{9}}(p)&=&{\overline \omega}z_{1}+{\overline \omega}z_{2}+\phantom{\omega}z_{3}+\omega z_{4}. %,&\phantom\quad & \sum_{i}\varepsilon_{i}& =&0\,.\quad\quad\quad\quad\quad\quad\quad\\
\end{smallmatrix}\]
The product of these restricted roots is the restriction of $\Delta_{\text{E}_8}$ to the generic semi-simple tensor in $\bw{3}\CC^9$. Collecting like terms in the product we have
$[\Res(\Delta_{\text{E}_{8}},\mathfrak{s}_{1})] =[ h^{6}]$, with $h =$
{\bgroup
\makeatletter
\renewcommand{\maketag@@@}[1]{\hbox{\m@th\normalsize\normalfont#1}}%
\makeatother
\fontsize{8}{9}\selectfont
\medmuskip=-1mu
\thinmuskip=-1mu
\thickmuskip=-1mu
\nulldelimiterspace=-1pt
\scriptspace=-0.75pt
\begin{align}&
z_{4}(z_{1}-z_{2}+z_{3})(z_{1}+\omega z_{2}+z_{3})
(z_{1}-z_{2}-\omega z_{3})
(z_{1}+\overline \omega z_{2}+z_{3})
(z_{1}-z_{2}-\overline \omega z_{3})
%\nonumber \\[-.5ex]&
(z_{1}+\omega z_{2}- \omega z_{3})
(z_{1}+\omega z_{2}-\overline \omega z_{3})
(z_{1}+\overline \omega z_{2}-\omega z_{3})
(z_{1}+\overline \omega z_{2}-\overline \omega z_{3})
\nonumber \\[-.5ex]&
\cdot z_{3}(z_{1}+z_{2}-z_{4})
(z_{1}-\omega z_{2}-z_{4})
(z_{1}+z_{2}+\omega z_{4})
(z_{1}-\overline \omega z_{2}-z_{4})
(z_{1}+z_{2}+\overline \omega z_{4})
%\nonumber \\[-.5ex]&
(z_{1}-\omega z_{2}+ \omega z_{4})
(z_{1}-\overline \omega z_{2}+\omega z_{4})
(z_{1}-\omega z_{2}+\overline \omega z_{4})
(z_{1}-\overline \omega z_{2}+\overline \omega z_{4})
\nonumber \\[-.5ex]&
\cdot z_{2}(z_{1}-z_{3}+z_{4})
(z_{1}+\omega z_{3}+z_{4})
(z_{1}-z_{3}-\omega z_{4})
(z_{1}+\overline \omega z_{3}+z_{4})
(z_{1}-z_{3}-\overline \omega z_{4})
%\nonumber \\[-.5ex]&
(z_{1}+\omega z_{3}- \omega z_{4})
(z_{1}+\omega z_{3}-\overline \omega z_{4})
(z_{1}+\overline \omega z_{3}-\omega z_{4})
(z_{1}+\overline \omega z_{3}-\overline \omega z_{4})
\nonumber \\[-1.5ex]&
\cdot z_{1}(z_{2}+z_{3}+z_{4})
(z_{2}-\omega z_{3}+z_{4})(z_{2}+z_{3}-\omega z_{4})
(z_{2}-\overline \omega z_{3}+z_{4})(z_{2}+z_{3}-\overline \omega z_{4})
%\nonumber \\[-.5ex]&
(z_{2}-\omega z_{3}-\omega z_{4})
(z_{2}-\overline \omega z_{3}- \omega z_{4})
(z_{2}-\omega z_{3}-\overline \omega z_{4})
(z_{2}-\overline \omega z_{3}-\overline \omega z_{4}).\label{eq:gr39z}
\end{align}
\egroup }
\hspace{-3pt}\noindent
 We will use this expression of $\Res(\Delta_{\text{E}_8}, \mathfrak{s}_1)$ in Section~\ref{sec:fundamental} when we establish the expression of $\Delta_{G(3,9)}$ on fundamental invariants.

\subsubsection{Restricting $\Delta_{\text{E}_8}$ and $\Delta_{\text{E}_6}$ to semi-simple elements of $\CC^3\otimes\CC^3\otimes \CC^3$} Choose an ordered basis of $\CC^{9}$ in 3 triplets coinciding with the choices determining the expression of $p_{1}$ in \eqref{eq:ssb39}:
\[
\langle e_{1}, e_{4}, e_{7} \rangle  \oplus
\langle e_{2}, e_{5}, e_{8} \rangle  \oplus
\langle e_{3}, e_{6}, e_{9} \rangle
  = \CC^{9}
.\]
Now we consider a copy of $\CC^{3}\otimes \CC^{3}\otimes \CC^{3}$ inside $\bw{3}\CC^{9}$ induced by this choice, and take $\{x_{I} \mid I\in \{0,1,2\}^{3}\} $ to be the corresponding basis of the tensor space, where $x_{I} = x_{i_{1}}\otimes x_{i_{2}}\otimes x_{i_{3}}$ and
a 0 (respectively a 1, or 2) in position $j$ corresponds to the first (respectively the second, or third) basis vector in factor $j$.
This induces a choice of basic semi-simple elements  $\mathfrak{s}' \subset \CC^{3}\otimes \CC^{3}\otimes \CC^{3}$:
\[\begin{matrix}
p_{2} &=& x_{000}+x_{111} + x_{222}, &&
p_{3} &=& x_{012}+x_{201} + x_{120}, &&
p_{4} &=& x_{021}+x_{102} + x_{210}.
\end{matrix}
\]
An open subset of semi-simple elements in $\mathfrak{s}'$ is given by the generic form
$
bp_{2} + cp_{3}+ dp_{4},
$ for some constants\footnote{We choose $b,c,d$ instead of $z_2,z_3,z_4$ to emphasize that we're in a different ambient space.} $b,c,d$ not all zero.
Evaluating fundamental invariants and the hyperdeterminant of $3\times 3\times 3$ tensors on $\mathfrak{s}'$ produces formulas that can be found in \cite[Eq.~(5)]{BremnerHuOeding}. 
 When restricted to $\mathfrak{s}'$ (by setting $a =z_1=0$, $b = z_2$, $c=z_3$, $d=z_4$) the product of the roots of $\text{E}_{8}$ becomes zero because of the factor of $z_1^6$ in \eqref{eq:gr39z}, which gets sent to zero in the projection. Restricting  \eqref{eq:gr39z} to a generic element of $\mathfrak{s}'$ and collecting like terms we obtain:
\[
\left[{ \small \Res\left(\mfrac{1} {z_1^{6}}(\Res(\Delta_{\text{E}_{8}},\mathfrak{s}_1)), \mathfrak{s}'\right) } \right]
= \left[
\begin{smallmatrix}
 \left( b+c+d \right) ^{6}
\left( {b}^{2}-bc+2\,bd+{c}^{2}-cd+{d}^{2} \right) ^{6}
\left( {b}^{2}-bc-bd+{c}^{2}+2\,cd+{d}^{2} \right) ^{6} \\
\cdot \left( {b}^{2}-bc-bd+{c}^{2}-cd+{d}^{2} \right) ^{6}
 \left( {b}^{2}+2\,bc-bd+{c}^{2}-cd+{d}^{2} \right) ^{6}
 {b}^{6}{c}^{6}{d}^{6}
 \\ 
\cdot  \left( c-d \right) ^{18}  \left( b-d \right) ^{18}\left( b-c \right) ^{18}
 \left( {c}^{2}+cd+{d}^{2} \right) ^{18}
 \left( {b}^{2}+bd+{d}^{2} \right) ^{18}
 \left( {b}^{2}+bc+{c}^{2} \right) ^{18}
\end{smallmatrix}\right].
 \]
Using the expressions in \cite[Eq.~(5)]{BremnerHuOeding} we find that this restriction is
\begin{equation}\label{eq:proj_e8}
\left[{ \small \Res\left(\mfrac{1} {z_1^{6}}(\Res(\Delta_{\text{E}_{8}},\mathfrak{s}_1)), \mathfrak{s}'\right) } \right] =\left[ \Res(\Delta_{333}^{2},\mathfrak{s}').\Res(f_{9}^{18}, \mathfrak{s}')\right],
\end{equation}
where $f_9$ is the degree $9$ Strassen invariant.
Restricting the 72 roots of $\text{E}_{6}$ to the semi-simple elements $\mathfrak{s}'$ by the same method, we find:
\begin{equation}\label{eq:e6s333}
\left[\Res(\Delta_{\text{E}_{6} },\mathfrak{s}')\right] = \left[\Res(\Delta_{333}^{2},\mathfrak{s}')\right].
\end{equation}
By Proposition~\ref{prop:up} Eq.~\eqref{eq:e6s333} is true for all elements and therefore  provides a more precise statement than the division relation obtained in Example~\ref{ex:e6}.
\subsection{Restricting $\Delta_{\text{E}_7}$ to semi-simple elements of $\bigwedge^4 \CC^8$}
Similarly,  we use the discriminant $\Delta_{\text{E}_7}$ expressed on semi-simple elements as the product of the roots of $\mathfrak{e}_7$.
 We follow the notation in \cite{Antonyan}.
Let $\{ \alpha \in R\}$ be the usual root system for $\text{E}_7$ expressed in the hyperplane $\sum_{i}\varepsilon_{i}=0$ where $\varepsilon_{1},\ldots, \varepsilon_{8}$ is the basis of the diagonal $8\times 8$ matrices.
A basic set of semi-simple elements  $\mathfrak{s}\subset \bw{4}\CC^8$ is
\begin{equation}\label{eq:ssb48}
\begin{matrix}
p_{1}=e_{1234}+e_{5678} ,&
p_{2}=e_{1357}+e_{6824}, &
p_{3}=e_{1562}+e_{8437},&
p_{4}=e_{1683}+e_{4752},\\
p_{5}=e_{1845}+e_{7263},&
p_{6}=e_{1476}+e_{2385},&
p_{7}=e_{1728}+e_{3546},
\end{matrix} \end{equation}
where $e_{ijkl} = e_{i}\wedge e_{j}\wedge e_{k}\wedge e_{l}$.
An open set of semi-simple elements is given by the generic form
$p = \sum_{i}y_{i}p_{i},
$ for parameters $y_{i}$ not all zero.
We used the following substitutions  \cite{Antonyan}:
{\small
\medmuskip=0mu
\thinmuskip=2mu
\thickmuskip=2mu
\nulldelimiterspace=2pt
\scriptspace=2pt
\begin{align*}
\varepsilon _1(p) &= \phantom{-}y_1+y_2+y_3+y_4+y_5+y_6+y_7, &
\varepsilon_2(p) &= \phantom{-}y_1-y_2+y_3-y_4-y_5-y_6+y_7,\\[-.5ex]
\varepsilon_3(p) &= \phantom{-}y_1+y_2-y_3+y_4-y_5-y_6-y_7, &
\varepsilon_4(p) &= \phantom{-}y_1-y_2-y_3-y_4+y_5+y_6-y_7,\\[-.5ex]
\varepsilon_5(p) &= -y_1+y_2+y_3-y_4+y_5-y_6-y_7, &
\varepsilon_6(p) &= -y_1-y_2+y_3+y_4-y_5+y_6-y_7,\\[-.5ex]
\varepsilon_7(p) &= -y_1+y_2-y_3-y_4-y_5+y_6+y_7, &
\varepsilon_8(p) &= -y_1-y_2-y_3+y_4+y_5-y_6+y_7.
\end{align*}
}
Restricting $\Delta_{\text{E}_7}$ to our choice of semi-simple tensors gives $[\Res(\Delta_{\text{E}_7},\mathfrak{s})] = [h^{2}]$, with $h=$
{\begingroup
\makeatletter
\renewcommand{\maketag@@@}[1]{\hbox{\m@th\normalsize\normalfont#1}}%
\makeatother
\scriptsize
\medmuskip=-1mu
\thinmuskip=-1mu
\thickmuskip=-1mu
\nulldelimiterspace=-1pt
\scriptspace=0pt
\begin{align}&
(y_1+y_2+y_3+y_6)
(y_1+y_2+y_3-y_6)
(y_1+y_2-y_3+y_6)
(y_1+y_2-y_3-y_6)
(y_1-y_2+y_3+y_6)
(y_1-y_2+y_3-y_6)
(y_1-y_2-y_3+y_6)
(y_1-y_2-y_3-y_6)
\nonumber\\[-.5ex]&
(y_1+y_3+y_4+y_5)
(y_1+y_3+y_4-y_5)
(y_1+y_3-y_4+y_5)
(y_1+y_3-y_4-y_5)
(y_1-y_3+y_4+y_5)
(y_1-y_3+y_4-y_5)
(y_1-y_3-y_4+y_5)
(y_1-y_3-y_4-y_5)
\nonumber\\[-.5ex]&
(y_1+y_2+y_5+y_7)
(y_1+y_2+y_5-y_7)
(y_1+y_2-y_5+y_7)
(y_1+y_2-y_5-y_7)
(y_1-y_2+y_5+y_7)
(y_1-y_2+y_5-y_7)
(y_1-y_2-y_5+y_7)
(y_1-y_2-y_5-y_7)
\nonumber\\[-.5ex]&
(y_1+y_4+y_6+y_7)
(y_1+y_4+y_6-y_7)
(y_1+y_4-y_6+y_7)
(y_1+y_4-y_6-y_7)
(y_1-y_4+y_6+y_7)
(y_1-y_4+y_6-y_7)
(y_1-y_4-y_6+y_7)
(y_1-y_4-y_6-y_7)
\nonumber\\[-.5ex]&
(y_2+y_3+y_4+y_7)
(y_2+y_3+y_4-y_7)
(y_2+y_3-y_4+y_7)
(y_2+y_3-y_4-y_7)
(y_2-y_3+y_4+y_7)
(y_2-y_3+y_4-y_7)
(y_2-y_3-y_4+y_7)
(y_2-y_3-y_4-y_7)
\nonumber\\[-.5ex]&
(y_2+y_4+y_5+y_6)
(y_2+y_4+y_5-y_6)
(y_2+y_4-y_5+y_6)
(y_2+y_4-y_5-y_6)
(y_2-y_4+y_5+y_6)
(y_2-y_4+y_5-y_6)
(y_2-y_4-y_5+y_6)
(y_2-y_4-y_5-y_6)
\nonumber\\[-.5ex]&
(y_3+y_5+y_6+y_7)
(y_3+y_5+y_6-y_7)
(y_3+y_5-y_6+y_7)
(y_3+y_5-y_6-y_7)
(y_3-y_5+y_6+y_7)
(y_3-y_5+y_6-y_7)
(y_3-y_5-y_6+y_7)
(y_3-y_5-y_6-y_7)
\nonumber\\[-.5ex]&
y_1
y_2
y_3
y_4
y_5
y_6
y_7.
 \label{eq:dg48}\raisetag{20pt}
\end{align}
\endgroup}
Since $[\Delta_{\Gr(4,8)}]=[\Res(\Delta_{\text{E}_7},\bigwedge^4 \CC^8)]$ (see Eq.~\eqref{eq:e7}), the previous expression already furnishes an expression of $\Delta_ {\text{Gr}(4,8)}$ on semi-simple elements.  This will be used in Section~\ref{sec:fundamental} to obtain $\Delta_ {\text{Gr}(4,8)}$ in terms of  fundamental invariants.
\subsubsection{Restricting $\Delta_{\text{E}_7}$ to semi-simple elements of $\CC^2\otimes \CC^2\otimes \CC^2\otimes \CC^2$}\label{sec:2222} 
Now choose an ordered basis of $\CC^{8}$ in four pairs compatible with the choice of $p_{1}$.
\[
\langle e_{1}, e_{5} \rangle  \oplus
\langle e_{2}, e_{6} \rangle  \oplus
\langle e_{3}, e_{7} \rangle  \oplus
\langle e_{4}, e_{8} \rangle  = \CC^{8}
.\]
Consider the copy of $\CC^{2}\otimes \CC^{2}\otimes \CC^{2}\otimes \CC^{2}$ inside $\bw{4}\CC^{8}$ induced by this choice, and take $\{x_{I} \mid I\in \{0,1\}^{4}\} $ to be the corresponding basis of the tensor space, where $x_{I} = x_{i_{1}}\otimes x_{i_{2}}\otimes x_{i_{3}}\otimes x_{i_{4}}$ and
a 0 (respectively a 1) in position $j$ corresponds to the first (respectively the second) basis vector in factor $j$.
This induces a projection of basic semi-simple elements \eqref{eq:ssb48} in $\bw{4}\CC^{8}$ to:
\[\begin{matrix}
q_{1} = x_{0000}+x_{1111}, \quad &
q_{2} = x_{0101}+x_{1010}, \quad &
q_{3} = x_{0110}+x_{1001}, \quad &
q_{4} = x_{0011}+x_{0011},
\end{matrix}
\]
whose span we denote by $\mathfrak{s}'$.
Specifically, the coordinate projection $\mathfrak{s}\to \mathfrak{s}'$ is given by
\begin{equation}\label{eq:proj48}
p_{1}=q_{1}, \quad\quad p_{2}=0,\quad\quad  p_{3}=0,\quad\quad  p_{4} = q_{2},\quad\quad   p_{5}=0,\quad\quad  p_{6} = q_{3},\quad \quad p_{7} = q_{4}.
 \end{equation}
Write a generic semi-simple form in $\mathfrak{s}'$ as $t_{1}q_{1} + t_{2} q_{2} + t_{3} q_{3} + t_{4} q_{4}$ for parameters $t_{i}$ not all zero.
On $\mathfrak{s}'$ the  $2\times 2\times 2\times 2$ hyperdeterminant is the Vandermonde determinant squared \cite{holweck_4qubit2}:\looseness-1
\[
\Res(\Delta_{2222}, \mathfrak s')= \det
\left( \begin{smallmatrix} 1&1&1&1\\[.5ex]
{t_{1}}^{2}&{t_{2}}^{2}&{t_{3}}^{2}&{t_{4}}^{2}\\[.5ex]
{t_{{1}}}^{4}&{t_{2}}^{4}&{t_{3}}^{4}&{t_{4}}^{4}\\[.5ex]
{t_{1}}^{6}&{t_{2}}^{6}&{t_{3}}^{6}&{t_{4}}^{6}\end{smallmatrix}
\right)^{\hspace{-1ex} 2} = \prod_{i<  j} ((t_{i}-t_{j})(t_{i}+t_{j}))^2.
 \]
The roots of $\text{E}_{7}$ contain 6 roots that vanish on $\mathfrak{s}'$. This is evident from \eqref{eq:dg48}, where the restriction of $\Res(\Delta_{\Gr(4,8)},\mathfrak s)$ contains $(y_{2}y_{3}y_{5})^{2}$, which we set to zero in \eqref{eq:proj48}.
When those 6 factors are cancelled we obtain a polynomial of degree 120 that has the 4-th power of this restriction of the hyperdeterminant as a factor:
{
\makeatletter
\renewcommand{\maketag@@@}[1]{\hbox{\m@th\normalsize\normalfont#1}}%
\makeatother
\fontsize{8}{9}\selectfont
\medmuskip=-.5mu
\thinmuskip=-.5mu
\thickmuskip=0mu
\nulldelimiterspace=0pt
\scriptspace=-0.75pt
%\medmuskip=-.5mu
%\thinmuskip=-1mu
%\thickmuskip=-.5mu
%%\nulldelimiterspace=3pt
%\scriptspace=0pt
\begin{align*}
\Res\left(\frac{\Res(\Delta_{ \text{E}_{7}}\hspace{.2em} ,\hspace{.2em}  \mathfrak s)  }{(y_{2}y_{3}y_{5})^{2}}\hspace{.2em} ,\hspace{.2em} \mathfrak{s}'\right) =\;&
(t_{1}+t_{2})^{8}
(t_{1}-t_{2})^{8}
(t_{1}+t_{3})^{8}
(t_{1}-t_{3})^{8}
(t_{1}+t_{4})^{8}
(t_{1}-t_{4})^{8}
%\nonumber \\[-2.25ex]&
(t_{2}+t_{3})^{8}
(t_{2}-t_{3})^{8}
(t_{2}+t_{4})^{8}
(t_{2}-t_{4})^{8}
(t_{3}+t_{4})^{8}
(t_{3}-t_{4})^{8}
\nonumber \\[-2.25ex]& \cdot
(t_{1}+t_{2}+t_{3}+t_{4})^{2}
(t_{1}-t_{2}-t_{3}+t_{4})^{2}
(t_{1}+t_{2}-t_{3}-t_{4})^{2}
(t_{1}-t_{2}+t_{3}-t_{4})^{2}
%\nonumber \\[-.5ex]&
(t_{1}+t_{2}-t_{3}+t_{4})^{2}
(t_{1}-t_{2}+t_{3}+t_{4})^{2}
\nonumber \\[-.5ex]&\cdot
(t_{1}+t_{2}+t_{3}-t_{4})^{2}
(t_{1}-t_{2}-t_{3}-t_{4})^{2}
{t_{1}}^{2}{t_{2}}^{2}{t_{3}}^{2}{t_{4}}^{2}.
\end{align*}
}

\section{The $\Gr(3,9)$ and $\Gr(4,8)$ discriminants in fundamental invariants}\label{sec:fundamental}
We use our approach to write the $\Gr(3,9)$- and $\Gr(4,8)$-discriminants in terms of fundamental invariants of lower degree. This has been done before in other settings,
see \cite{Morozov2010, BremnerHuOeding, CDZG}.
\subsection{$\Gr(3,9)$}
The invariant ring $\CC[\bw{3}\CC^{9}]^{\SL(9)}$  is a polynomial ring generated by fundamental invariants denoted $f_{i}$ for degrees $i=12, 18, 24, 30$. These invariants can be computed explicitly as traces of powers of Katanova's matrix \cite{Katanova}, which we recall in Section~\ref{sec:gr39gr48}. We can also restrict those invariants to semi-simple elements $\mathfrak{s}\subset \bw{3}{\CC^9}$ and express them in terms of the roots of $\text{E}_8$ using the following isomorphisms induced by the restriction maps:
\[
 \CC[\mathfrak{e}_8]^{\text{E}_8}\isom \CC[\mathfrak{h}]^{W_{\text{E}_8}}\to  \CC[\mathfrak{s}]^{W_{SL_9}} \isom \CC[\bw{3}\CC^9]^{\SL(9)} ,
\]
where $W_G$ denotes the Weyl group of $G$.
In this case because  $W_{\text{E}_8}$ acts transitively on the root system, invariants of degree $i$ of $\CC[\mathfrak{h}]^{W_{\text{E}_8}}$ can be chosen to be the sum of the $i$-th power of the roots and therefore % $(f_{i})_{|\mathfrak{s}}$ can be expressed as,
\begin{equation}\label{eq:fundinvariants}
\Res(f_{i},\mathfrak{s})=\sum_{\alpha\in R} \Res(\alpha ^i,\mathfrak{s}).
\end{equation}
These expressions agree with Katanova's up to a swap between $z_{2}$ and $z_{3}$, which is a simple base-change.
The terms occurring in the fundamental invariants are symmetric under a signed permutation action, which is generated by 
$\alpha = (z_{1}\rightarrow z_{2},z_{2}\rightarrow -z_{3},z_{3}\rightarrow z_{4}, z_{4}\rightarrow z_{1} )$ and $\beta=  (z_{1}\rightarrow z_{2},z_{2}\rightarrow z_{1},z_{3}\rightarrow -z_{3}, z_{4}\rightarrow z_{4} )$. Figure~\ref{fig:f39} gives a list of the terms occurring in each invariant up to the symmetry group $\langle \alpha, \beta\rangle$.

%\makeatletter\the\c@topnumber\par\makeatother
%\setcounter{topnumber}{2}
\begin{figure}[!b]
{\Small \begin{align*}
f_{12}: &\;
z_{1}^{12},\; 22 z_{1}^{6} z_{2}^{6},\; -220 z_{1}^{6} z_{2}^{3} z_{3}^{3}
\\[1ex]
f_{18}:&\;
z_{1}^{18},\; {-17 z_{1}^{12} z_{2}^{6}},\; 170 z_{1}^{12} z_{2}^{3} z_{3}^{3},\; 1870 z_{1}^{9} z_{2}^{6} z_{3}^{3},\; -7854 z_{1}^{6} z_{2}^{6} z_{3}^{6}
\\[1ex]
f_{24}: &\;
111 z_{1}^{24},\; 506 z_{1}^{18} z_{2}^{6},\; 10166 z_{1}^{12} z_{2}^{12},\; {-5060 z_{1}^{18} z_{2}^{3} z_{3}^{3}},\; {-206448 z_{1}^{15} z_{2}^{6} z_{3}^{3}},\; {-1118260 z_{1}^{12} z_{2}^{9} z_{3}^{3}},
\\[-.5ex]&\; 4696692 z_{1}^{12} z_{2}^{6} z_{3}^{6},\; 12300860 z_{1}^{9} z_{2}^{9} z_{3}^{6}
\\[1ex]
f_{30}: &\;
584 z_{1}^{30},\; {-435 z_{1}^{24} z_{2}^{6}},\; {-63365 z_{1}^{18} z_{2}^{12}},\; 4350 z_{1}^{24} z_{2}^{3} z_{3}^{3},\; 440220 z_{1}^{21} z_{2}^{6} z_{3}^{3},\; 6970150 z_{1}^{18} z_{2}^{9} z_{3}^{3},\; 25852920 z_{1}^{15} z_{2}^{12} z_{3}^{3},
\\[-.5ex]&\; {-29274630 z_{1}^{18} z_{2}^{6} z_{3}^{6}},\; {-284382120 z_{1}^{15} z_{2}^{9} z_{3}^{6}},\; {-588153930 z_{1}^{12} z_{2}^{12} z_{3}^{6}},\; 1540403150 z_{1}^{12} z_{2}^{9} z_{3}^{9}
\end{align*}}%\vspace{-1.5em}
\caption{Terms up to symmetry by $\langle\alpha, \beta \rangle$ occurring in expressions of fundamental invariants for $\CC[\bw{3}\CC^{9}]^{\SL(9)}$ restricted to semi-simple parts.}\label{fig:f39}
\end{figure}

\begin{theorem}
 Let us consider the invariant ring  $\CC[\bigwedge^3 \CC^9]^{\SL(9)}=\CC[f_{12},f_{18},f_{24},f_{30}]$. Then the $\Gr(3,9)$-discriminant, $\Delta_{\Gr(3,9)}$, is a degree $120$ polynomial that can be expressed in terms of the fundamental invariants as:
 $\Delta_{\Gr(3,9)}=$
{\begingroup
\makeatletter
\renewcommand{\maketag@@@}[1]{\hbox{\m@th\normalsize\normalfont#1}}%
\makeatother
\footnotesize
\medmuskip=-.5mu
\thinmuskip=-1mu
\thickmuskip=-1mu
%\nulldelimiterspace=3pt
\scriptspace=0pt
\begin{align}
& f_{12}^{10}-
\mfrac{188875}{1526823}\,f_{12}^{8}f_{24}-
\mfrac{44940218765172270463}{2232199994248855116}\,f_{12}^{7}f_{18}^{2}+
\mfrac{522717082571600510}{5022449987059924011}\,f_{12}^{6}f_{18}f_{30}+
\mfrac{156259946875}{27974261679948}\,f_{12}^{6}f_{24}^{2}
\nonumber\\ &
+
\mfrac{20955843759677134000}{15067349961179772033}\,f_{12}^{5}f_{18}^{2}f_{24}+
\mfrac{113325967730636958495085217}{1009180965699898771226274}\,f_{12}^{4}f_{18}^{4}-
\mfrac{8007699664851700}{45202049883539316099}\,f_{12}^{5}f_{30}^{2}
\nonumber\\ &
-
\mfrac{951594557840795000}{135606149650617948297}\,f_{12}^{4}f_{18}f_{24}f_{30}-
\mfrac{37339826093750}{327991224631970313}\,f_{12}^{4}f_{24}^{3}-
\mfrac{4631798176278228432974860}{4541314345649544470518233}\,f_{12}^{3}f_{18}^{3}f_{30}
\nonumber\\ &
-
\mfrac{43381098724294271875}{2440910693711123069346}\,f_{12}^{3}f_{18}^{2}f_{24}^{2}-
\mfrac{48098757899275092625}{15067349961179772033}\,f_{12}^{2}f_{18}^{4}f_{24}-
\mfrac{11518845901768651039}{329340982758027804}\,f_{12}f_{18}^{6}
\nonumber\\ &
+
\mfrac{1392403335812500}{135606149650617948297}\,f_{12}^{3}f_{24}f_{30}^{2}+
\mfrac{6686357462527147925300}{1513771448549848156839411}\,f_{12}^{2}f_{18}^{2}f_{30}^{2}+
\mfrac{140973248590625000}{1220455346855561534673}\,f_{12}^{2}f_{18}f_{24}^{2}f_{30}
\nonumber\\ &
+
\mfrac{351718750000}{327991224631970313}\,f_{12}^{2}f_{24}^{4}+
\mfrac{2133816827644645000}{135606149650617948297}\,f_{12}f_{18}^{3}f_{24}f_{30}-
\mfrac{198339133437500}{741017211205562559}\,f_{12}f_{18}^{2}f_{24}^{3}
\nonumber\\ &
+
\mfrac{45691574382263590}{741017211205562559}\,f_{18}^{5}f_{30}-
\mfrac{32778366465625}{48591292538069676}\,f_{18}^{4}f_{24}^{2}-
\mfrac{14445540571041712000}{1513771448549848156839411}\,f_{12}f_{18}f_{30}^{3}
\nonumber\\ &
-
\mfrac{216716472500000}{1220455346855561534673}\,f_{12}f_{24}^{2}f_{30}^{2}-
\mfrac{2371961791512500}{135606149650617948297}\,f_{18}^{2}f_{24}f_{30}^{2}+
\mfrac{10890275000000}{20007464702550189093}\,f_{18}f_{24}^{3}f_{30}
\nonumber\\ &
-
\mfrac{1250000000}{327991224631970313}\,f_{24}^{5}+
\mfrac{34328756109890000}{4541314345649544470518233}\,f_{30}^{4}.
\end{align}
\endgroup}\label{eq:g39}
\hspace{-1ex}Moreover $\Delta_{\Gr(3,9)}$ has the geometric interpretation of being the restriction of $\Delta_{\text{E}_8}$ on $\bigwedge^3 \CC^9$. More precisely we have $[\Delta_{\Gr(3,9)}^2]=[\Res(\Delta_{\text{E}_8},\bigwedge^3 \CC^9)]$.
\end{theorem}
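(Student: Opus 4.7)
The plan is to decompose the theorem into two separate assertions: the geometric identification $[\Delta_{\Gr(3,9)}^2] = [\pi_{(\bw{3}\CC^9)^{*\perp}}(\Delta_{E_8})]$, and the explicit polynomial expression of $\Delta_{\Gr(3,9)}$ in the fundamental invariants $f_{12}, f_{18}, f_{24}, f_{30}$. I will first secure the geometric identity, then use Proposition~\ref{prop:up} to reduce the expression in fundamental invariants to a finite linear-algebra problem on the semi-simple subspace $\mathfrak{s}_{1}$.

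For the geometric identity, I would build on Example~\ref{ex:e8}: by Theorem~\ref{thm:tan} applied to the $\ZZ_3$-graded decomposition $\mathfrak{e}_{8} = \bw{3}V^{*} \oplus \mathfrak{sl}(V) \oplus \bw{3}V$, we have $\Delta_{\Gr(3,9)} \mid \pi_{(\bw{3}\CC^{9})^{*\perp}}(\Delta_{E_{8}})$. To upgrade this to divisibility by $\Delta_{\Gr(3,9)}^{2}$, I would apply Corollary~\ref{cor:gkz}(\ref{cap}): for a generic $[h] \in \Gr(3,9)^{\vee} \subset \PP \mathfrak{g}_{1}^{*}$, one exhibits two distinct smooth points of $X_{E_{8}}^{ad}$ whose tangent spaces project into $\widehat T_{y}\Gr(3,9)$, forcing multiplicity $m \geq 2$. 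A clean substitute for this direct geometric argument is the semi-simple calculation already carried out in the previous section: the restriction $(\Delta_{E_{8}})_{\mid \mathfrak{s}_{1}} = h^{6}$ from \eqref{eq:gr39z}, together with the established divisibility, forces $(\Delta_{\Gr(3,9)})_{\mid \mathfrak{s}_{1}} = \pm h^{3}$ and hence $m=2$ after accounting for degree. A degree comparison then closes the argument: $\deg \Delta_{E_{8}} = 240 = 2\cdot 120 = 2\deg \Delta_{\Gr(3,9)}$, so $\Delta_{\Gr(3,9)}^{2}$ and $\pi(\Delta_{E_{8}})$ agree up to a nonzero scalar.

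For the explicit expression in fundamental invariants, I would argue as follows. Since $\Delta_{\Gr(3,9)}$ is $\SL(9)$-invariant, it lies in $\CC[\bw{3}\CC^{9}]^{\SL(9)} = \CC[f_{12}, f_{18}, f_{24}, f_{30}]$. The monomials $f_{12}^{a} f_{18}^{b} f_{24}^{c} f_{30}^{d}$ of weighted degree $12a + 18b + 24c + 30d = 120$ form a finite list (precisely the list of monomials appearing in the statement), spanning a finite-dimensional subspace $W$ of $\CC[\bw{3}\CC^{9}]^{\SL(9)}_{120}$. By Proposition~\ref{prop:up} it suffices to determine the coefficients by matching restrictions to $\mathfrak{s}_{1}$: on the left, $(\Delta_{\Gr(3,9)})_{\mid \mathfrak{s}_{1}} = h^{3}$ from the first step, which is the explicit degree-$120$ polynomial in $z_{1},\ldots,z_{4}$ obtained by cubing \eqref{eq:gr39z}; on the right, each $(f_{i})_{\mid \mathfrak{s}_{1}}$ is given by the power-sum formula \eqref{eq:fundinvariants}, with terms recorded in Figure~\ref{fig:f39}. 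Equating the two sides monomial-by-monomial (in the $z_{i}$) gives a linear system for the unknown coefficients $c_{I}$.

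The main obstacle is computational rather than conceptual. Although the number of monomials in $W$ is moderate, evaluating each $(f_{i})_{\mid \mathfrak{s}_{1}}$ and raising to high powers produces polynomials with a huge number of terms in $\QQ[z_{1},z_{2},z_{3},z_{4}]$, and the resulting coefficients are rationals with large numerators and denominators (as visible in the theorem statement). I would resolve this following the strategy announced in the introduction: evaluate both sides at many random points $(z_{1},\ldots,z_{4}) \in \FF_{p}^{4}$ for several primes $p$, solve the resulting dense linear systems over $\FF_{p}$, combine the mod-$p$ solutions via the Chinese Remainder Theorem, and recover the $\QQ$-coefficients in \eqref{eq:g39} by rational reconstruction. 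Uniqueness of the expression follows from algebraic independence of $f_{12}, f_{18}, f_{24}, f_{30}$.
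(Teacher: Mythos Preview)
Your overall architecture is close to the paper's but in the reverse order, and that reversal introduces a real gap. The paper first determines the expression \eqref{eq:g39} by \emph{vanishing interpolation}: it observes that a generic semi-simple form $p=z_1p_1+\cdots+z_4p_4$ with one $z_i=0$ defines a hyperplane in $\Gr(3,9)^{\vee}$, evaluates the $28$ degree-$120$ monomials in $f_{12},f_{18},f_{24},f_{30}$ at enough such points, and solves a homogeneous linear system over~$\QQ$. Only \emph{after} obtaining \eqref{eq:g39} does it substitute the semi-simple expressions for the $f_i$, factor the result as $h^3$ (equation~\eqref{eq:del39}), compare with $(\Delta_{E_8})_{\mid\mathfrak s_1}=h^6$, and invoke Proposition~\ref{prop:up} to conclude $[\Delta_{\Gr(3,9)}^2]=[\pi(\Delta_{E_8})]$. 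In particular the multiplicity $m=2$ is \emph{output}, not input.

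Your route instead needs $(\Delta_{\Gr(3,9)})_{\mid\mathfrak s_1}=\pm h^3$ as \emph{input} to the interpolation, and your ``clean substitute'' for the two-tangent-points argument does not deliver it. From $\Delta_{\Gr(3,9)}\mid \pi(\Delta_{E_8})$ and $(\Delta_{E_8})_{\mid\mathfrak s_1}=h^6$ you only get that $(\Delta_{\Gr(3,9)})_{\mid\mathfrak s_1}$ is a degree-$120$ divisor of $h^6$; there are many such divisors, and nothing you have written singles out $h^3$. What is missing is an appeal to the little Weyl group $W'$ acting on $\mathfrak s_1$: since $\Delta_{\Gr(3,9)}$ is $\SL(9)$-invariant its restriction is $W'$-invariant, and because $W'$ permutes the $40$ linear factors of $h$ transitively (they are the projections of a single $W(E_8)$-orbit of roots), every $W'$-invariant divisor of $h^6$ is a power of $h$; degree then forces $h^3$. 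If you add this sentence your order works; without it the argument is circular, because your interpolation target is not yet known. As a minor point, the mod-$p$ / rational-reconstruction machinery you invoke is unnecessary here: with only $28$ unknowns and four $z$-variables the paper carries the computation out directly over~$\QQ$ (that machinery is reserved for the $\Gr(4,8)$ case with $15{,}976$ unknowns).
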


\begin{proof}
This proof relies on computations, so we provide the scripts needed to reproduce our results as well as the outputs in the ancillary files accompanying the arXiv version of this paper.
The discriminant $\Delta_{\Gr(3,9)} \in \CC[\bw{3}\CC^{9}]^{\SL(9)}$ has degree 120 \cite{LascouxDegreeDualGrassmannian}. Since the invariant ring is a polynomial ring, we can express $\Delta_{\Gr(3,9)}$ as a polynomial in the fundamental invariants via standard linear interpolation in rational arithmetic. We can either do this computation directly in terms of the $f_i$'s or by first finding the relation between $\Res(\Delta_{\Gr(3,9)},\mathfrak{s})$ and the invariants $\Res(f_{i},\mathfrak{s})$ and then lifting the resulting expression using Proposition~\ref{prop:up}. 
 To be more precise, there are 28 monomials in $f_{12}, f_{18}, f_{24}, f_{30}$ of degree $120$. These monomials are expressed in terms of semi-simple elements up to symmetry in Figure~\ref{fig:f39}. To obtain the coefficients of the degree $120$ invariant corresponding to $\Delta_{\Gr(3,9)}$ we evaluate at sufficiently many points of $\Gr(3,9)^{\vee}$ and solve a linear system. We restrict our calculation to the set of semi-simple elements, i.e., to a generic element of $\mathfrak{s}\subset \bigwedge^3 \CC^9$ (See Section~\ref{sec:gr39gr48}):
 \begin{equation}\label{eq:generic_tensor}
  p=z_1p_1+z_2p_2+z_3p_3+z_4p_4.
 \end{equation}
A straightforward calculation shows that whenever one of the $z_i$ is zero then the hyperplane $H_p=\{v\in \bigwedge^3 \CC^9, p^*(v)=0\}$ belongs to $\Gr(3,9)^\vee$.  By randomly choosing three non-zero variables in Eq.~\eqref{eq:generic_tensor} one can get enough independent relations to determine the $28$ coefficients of $\Delta_{\Gr(3,9)}$. 
The ancillary Maple file \path{gr39_dual_1.mw} performs this calculation.

Using the expression \eqref{eq:g39} on semi-simple elements and collecting terms we find
that up to a non-zero constant $\Res(\Delta_{\Gr(3,9)},\mathfrak{s})$ has the form $h^{3}$ with $h = $
%
%\begin{equation}
{\begingroup
\makeatletter
\renewcommand{\maketag@@@}[1]{\hbox{\m@th\normalsize\normalfont#1}}%
\makeatother
\fontsize{8}{9}\selectfont
\medmuskip=-.5mu
\thinmuskip=-.5mu
\thickmuskip=0mu
\nulldelimiterspace=0pt
\scriptspace=-0.75pt
\begin{align}
& z_{1}z_{2}z_{3}z_{4}(z_{1}+z_{2}-z_{3})( z_{1}+z_{3}-z_{4})( z_{1}-z_{2}+z_{4})( z_{2}+z_{3}+z_{4}) \nonumber \\[-.5ex]&\cdot
( z_{1}^2-z_{1}z_{2}+z_{2}^2+z_{1}z_{3}-2z_{2}z_{3}+z_{3}^2) 
( z_{1}^2+z_{1}z_{2}+z_{2}^2-z_{1}z_{4}-2z_{2}z_{4}+z_{4}^2)
(z_{1}^2-z_{1}z_{3}+z_{3}^2+z_{1}z_{4}-2z_{3}z_{4}+z_{4}^2)
( z_{2}^2-z_{2}z_{3}+z_{3}^2-z_{2}z_{4}+2z_{3}z_{4}+z_{4}^2)\nonumber \\[-.5ex]& \cdot
( z_{1}^2-2z_{1}z_{2}+z_{2}^2-z_{1}z_{4}+z_{2}z_{4}+z_{4}^2)
(z_{1}^2-z_{1}z_{2}+z_{2}^2-2z_{1}z_{3}+z_{2}z_{3}+z_{3}^2)
( z_{1}^2-z_{1}z_{2}+z_{2}^2+z_{1}z_{3}+z_{2}z_{3}+z_{3}^2)
( z_{1}^2+z_{1}z_{2}+z_{2}^2-z_{1}z_{4}+z_{2}z_{4}+z_{4}^2)\nonumber \\[-.5ex]&\cdot
(z_{1}^2+z_{1}z_{2}+z_{2}^2+2z_{1}z_{4}+z_{2}z_{4}+z_{4}^2)
( z_{1}^2+2z_{1}z_{2}+z_{2}^2+z_{1}z_{3}+z_{2}z_{3}+z_{3}^2)
( z_{1}^2-z_{1}z_{3}+z_{3}^2-2z_{1}z_{4}+z_{3}z_{4}+z_{4}^2)
(z_{1}^2-z_{1}z_{3}+z_{3}^2+z_{1}z_{4}+z_{3}z_{4}+z_{4}^2)\nonumber \\[-.5ex]&\cdot
( z_{1}^2+2z_{1}z_{3}+z_{3}^2+z_{1}z_{4}+z_{3}z_{4}+z_{4}^2)
( z_{2}^2-z_{2}z_{3}+z_{3}^2-z_{2}z_{4}-z_{3}z_{4}+z_{4}^2)
(z_{2}^2-z_{2}z_{3}+z_{3}^2+2z_{2}z_{4}-z_{3}z_{4}+z_{4}^2)
( z_{2}^2+2z_{2}z_{3}+z_{3}^2-z_{2}z_{4}-z_{3}z_{4}+z_{4}^2).
\label{eq:del39}
\end{align}
\endgroup}
%\end{equation}
Comparing \eqref{eq:del39} with  \eqref{eq:gr39z} we obtain the relation $[\Delta_{\Gr(3,9)}^2] = [\Res(\Delta_{\text{E}_8},\bigwedge^3 \CC^9)]$ on semi-simple elements. Proposition~\ref{prop:up} establishes this relation for all elements. This shows that how the dual variety $\Gr(3,9)^{\vee}$ is obtained as a projection of the $\text{E}_8$-discriminant and provides a more precise statement than the first division relation of Example~\ref{ex:e8}. \qedhere
\end{proof}

\begin{remark}
 Using the expression of the projection of $\Delta_{\text{E}_8}$ to $(\CC^3)^{\otimes 3}$ given by Eq.~\eqref{eq:proj_e8} and the relation $[\Delta_{\Gr(3,9)}^2] = [\Res(\Delta_{\text{E}_8},\bigwedge^3 \CC^9)]$, one can consider the restriction of $\Delta_{\Gr(3,9)}$ to $(\CC^{3})^{\otimes 3}$ to semi-simple elements. Comparing \eqref{eq:del39} and \cite[Eq.~(5)]{BremnerHuOeding} we see that
\[
\left[ \Res\left(\mfrac{1} {a^{3}}\Delta_{\Gr(3,9)},\mathfrak{s}'\right) \right]
=
\left[ \Res(\Delta_{333},\mathfrak{s}')\right].
\]
\end{remark}

\subsection{$\Gr(4,8)$}
A similar analysis can be done for $\bw{4}\CC^{8}$. Again, the invariant ring is a polynomial ring generated now by fundamental invariants of degrees 2, 6, 8, 10, 12, 14, 18.
By Proposition~\ref{prop:up} it suffices to work on a generic semi-simple element. A fundamental invariant in $\CC[\bw{4}\CC^{8}]^{\SL(8)}$ of degree $d$ can be computed on semi-simple tensors as $d$-th powers of sums of the roots of $\text{E}_{7}$ (see \eqref{eq:fundinvariants}). These polynomials are symmetric in the variables $y_{1}\ldots y_{7}$.
Figure~\ref{fig:f8} gives a list of their terms up to symmetry. Note  the coefficients are highly composite.
\begin{figure}
{\Small \begin{align*}
f_{2}: &\; y_{1}^{2}
\\[1ex]
f_6:&\;
2y_1^6,\;5y_1^4y_2^2,\;15y_1^2y_2^2y_3^2
\\[1ex]
f_8: &\;
9y_1^8,\;14y_1^6y_2^2,\;35y_1^4y_2^4,\;105y_1^4y_2^2y_3^2,\;630y_1^2y_2^2y_3^2y_4^2
\\[1ex]
f_{10}:&\;
22y_1^{10},\;15y_1^8y_2^2,\;70y_1^6y_2^4,\;210y_1^6y_2^2y_3^2,\;525y_1^4y_2^4y_3^2,\;3150y_1^4y_2^2y_3^2y_4^2
\\[1ex]
f_{12}:&\;
86y_1^{12},\;22y_1^{10}y_2^2,\;165y_1^8y_2^4,\;308y_1^6y_2^6,\;495y_1^8y_2^2y_3^2,\;2310y_1^6y_2^4y_3^2,\;5775y_1^4y_2^4y_3^4,
 \;13860y_1^6y_2^2y_3^2y_4^2,\;34650y_1^4y_2^4y_3^2y_4^2
\\[1ex]
f_{14}:
&\;
2052y_1^{14},\;182y_1^{12}y_2^2,\;2002y_1^{10}y_2^4,\;6006y_1^8y_2^6,\;6006y_1^{10}y_2^2y_3^2,\;45045y_1^8y_2^4y_3^2,\;84084y_1^6y_2^6y_3^2,\;210210y_1^6y_2^4y_3^4,
\\[-.5ex] &\;270270y_1^8y_2^2y_3^2y_4^2,\;1261260y_1^6y_2^4y_3^2y_4^2,\;3153150y_1^4y_2^4y_3^4y_4^2
\\[1ex]
f_{18}:
&\;
5462y_1^{18},\;51y_1^{16}y_2^2,\;1020y_1^{14}y_2^4,\;6188y_1^{12}y_2^6,\;14586y_1^{10}y_2^8,\;3060y_1^{14}y_2^2y_3^2,\;46410y_1^{12}y_2^4y_3^2,\;204204y_1^{10}y_2^6y_3^2,
\\[-.5ex]&
\;328185y_1^8y_2^8y_3^2,\;510510y_1^{10}y_2^4y_3^4,\;1531530y_1^8y_2^6y_3^4,\;2858856y_1^6y_2^6y_3^6,\;278460y_1^{12}y_2^2y_3^2y_4^2,\;3063060y_1^{10}y_2^4y_3^2y_4^2,
\\[-.5ex]&
\;9189180y_1^8y_2^6y_3^2y_4^2,\;22972950y_1^8y_2^4y_3^4y_4^2,\;42882840y_1^6y_2^6y_3^4y_4^2,\;107207100y_1^6y_2^4y_3^4y_4^4
\end{align*}}%\vspace{-1.5em}
\caption{Terms up to symmetry occurring in expressions of fundamental invariants for $\CC[\bw{4}\CC^{8}]^{\SL(8)}$ restricted to semi-simple parts.}\label{fig:f8}
\end{figure}

\begin{theorem}
Consider the invariant ring $\CC[\bigwedge^4 \CC^8]^{\SL(8)}=\CC[f_2,f_6,f_8,f_{10},f_{12},f_{18}]$. Then the $\Gr(4,8)$-discriminant, $\Delta_{\Gr(4,8)}$ is a degree $126$ polynomial that can be expressed in terms of $15,942$ monomials of fundamental invariants:

  {\begingroup
\makeatletter
\renewcommand{\maketag@@@}[1]{\hbox{\m@th\normalsize\normalfont#1}}%
\makeatother
\fontsize{8}{9}\selectfont
 \begin{align}\Delta_{\Gr(4,8)} =\;&
-(11228550634163820692582736367065066800237662227759449345598
  861374381270810701586235392
  \nonumber \\[-.5em]
  &/1900359976262346454474448419809074
%\nonumber  \\[-.5em]
%&
  880484088763429831167939681466204604687770731158447265625)
      f_{2}^{63}  \nonumber \\[-.5em]
      &
      + \cdots +(3/1690514664168754070821429178618909) f_{18}^{7},
\label{eq:del48}
\end{align}
\endgroup}
\hspace{-1ex}\noindent
Moreover $\Delta_{\Gr(4,8)}$ has the geometric interpretation of being the projection of $\Delta_{\text{E}_7}$ to $\bigwedge ^4 \CC^8$. More precisely we have 
$[\Delta_{\Gr(4,8)}]=[\Res(\Delta_{\text{E}_7},\bigwedge^4 \CC^8)]$.
\end{theorem}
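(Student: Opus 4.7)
The theorem has two assertions: the geometric equality $[\Delta_{\Gr(4,8)}] = [\pi_{(\bw{4}\CC^{8})^{*\perp}}(\Delta_{E_7})]$ and the explicit expansion of $\Delta_{\Gr(4,8)}$ in the $15{,}942$-dimensional space of degree-$126$ monomials in $f_2,f_6,f_8,f_{10},f_{12},f_{18}$. My plan is to dispose of the first using the Lie-theoretic machinery already assembled, and to attack the second by large-scale rational interpolation.

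For the geometric identity I would follow Example~\ref{ex:e7} almost verbatim. Under the $\ZZ_2$-grading $\mathfrak{e}_7=\sl_8\oplus\bw{4}\CC^8$, the intersection $X_{E_7}^{ad}\cap\PP(\bw{4}\CC^8)$ is the $\SL(8)$-orbit of $E = e_1\wedge e_2\wedge e_3\wedge e_4$, namely $\Gr(4,8)$. The grading guarantees that the $\bw{4}\CC^8$-component of $\widehat T_E X_{E_7}^{ad}$ coincides with $\widehat T_E\Gr(4,8)$, so Corollary~\ref{cor:gkz}(\ref{cap}) furnishes the divisibility $\Delta_{\Gr(4,8)}^{m}\mid \pi_{(\bw{4}\CC^{8})^{*\perp}}(\Delta_{E_7})$ for some $m\geq 1$. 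Since $\deg\Delta_{E_7}=126=\deg\Delta_{\Gr(4,8)}$ and the right-hand side is non-zero (witnessed by the explicit formula \eqref{eq:dg48} on semi-simple elements), necessarily $m=1$ and the two sides agree up to scalar.

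For the expansion in fundamental invariants, the invariant ring $\CC[\bw{4}\CC^8]^{\SL(8)}=\CC[f_2,f_6,f_8,f_{10},f_{12},f_{18}]$ is polynomial, so $\Delta_{\Gr(4,8)}$ is the unique $\CC$-linear combination of the $15{,}942$ monomials $f_2^{a_2}f_6^{a_6}f_8^{a_8}f_{10}^{a_{10}}f_{12}^{a_{12}}f_{18}^{a_{18}}$ with $2a_2+6a_6+8a_8+10a_{10}+12a_{12}+18a_{18}=126$. By Proposition~\ref{prop:up} these coefficients are determined by testing on a generic semi-simple element $p=\sum_{i=1}^7 y_i p_i\in\mathfrak{s}\subset\bw{4}\CC^8$. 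On this slice the left-hand side is the square root of the explicit product \eqref{eq:dg48}, while each candidate monomial becomes a symmetric polynomial in $y_1,\dots,y_7$ via Figure~\ref{fig:f8}. Specializing $(y_1,\dots,y_7)$ to sufficiently many numerical probes produces a linear system in the $15{,}942$ unknown rationals.

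The principal obstacle is scale: rational Gaussian elimination on a dense $15{,}942\times 15{,}942$ system suffers prohibitive intermediate coefficient growth, as is already evident from the sixty-digit numerator and denominator in \eqref{eq:del48}. The remedy I would employ is the standard mod-$p$/Chinese-remainder/rational-reconstruction pipeline: select a large set of primes $p$ avoiding the denominators of the evaluation matrix; solve the same interpolation system in $\FF_p$ for each prime; Chinese-remainder the coefficient vectors across primes; and apply rational reconstruction to each coordinate, continuing until the reconstructed rationals stabilize. Final correctness is verified by evaluating both \eqref{eq:del48} and the product formula at a fresh semi-simple vector not used in the interpolation, after which Proposition~\ref{prop:up} promotes the identity from $\mathfrak{s}$ to all of $\bw{4}\CC^8$.
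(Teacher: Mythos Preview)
Your approach tracks the paper's almost exactly: the geometric identity via Example~\ref{ex:e7} plus degree comparison, and the explicit expansion via mod-$p$ interpolation, Chinese remaindering, and rational reconstruction on the semi-simple slice, lifted by Proposition~\ref{prop:up}. Two points deserve comment.

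First, a bookkeeping slip (inherited from a typo in the theorem statement): the invariant ring has \emph{seven} generators $f_2,f_6,f_8,f_{10},f_{12},f_{14},f_{18}$, not six; with $f_{14}$ included there are $15{,}976$ degree-$126$ monomials, and $15{,}942$ is merely the number that survive with nonzero coefficient. Your linear system is therefore $15{,}976$-dimensional, and your monomial list and diophantine constraint need $f_{14}$ added.

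Second, and this is the genuine gap: your proposed final step, ``verify by evaluating at a fresh semi-simple vector,'' is only a probabilistic sanity check. It does not exclude the possibility that rational reconstruction produced incorrect rationals that happen to agree at the test point (and modulo every prime used); stabilization of the reconstruction is likewise heuristic. The paper is explicit that such checks are not a proof, and closes the gap with a deterministic certification bound (Lemma~\ref{lem:cert}): if $Ay\equiv 0 \pmod{M}$ with integer $A,y$ and $n\,\|A\|_{\max}\,\|y\|_\infty<M/2$, then $Ay=0$ over $\ZZ$. With $\|A\|_{\max}<10^{200}$, the lcm of denominators in $y$ below $10^{115}$, $n<10^{6}$, and $M$ the product of one hundred four-digit primes ($\approx 10^{360}$), the inequality holds and the reconstruction is rigorously certified. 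Without this a~priori bound your argument remains heuristic rather than a proof.
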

\begin{proof} The ``moreover'' part of the Theorem was proved in Eq.(\ref{eq:e7}) and we obtained in Section~\ref{sec:semi-simple} an expression of the $\Gr(4,8)$ discriminant on semi-simple elements. The dual $\Gr(4,8)^\vee$ has degree $126$ \cite{LascouxDegreeDualGrassmannian} and there are $15,976$ monomials in the fundamental invariants with degree $126$.
In theory, linear interpolation over the rationals should work as follows. Evaluating each monomial in fundamental invariants and the discriminant at at least $n=15,976+1$ points (in practice we use 10\% more points in addition to help ensure that the points we choose at random are not in special position). Store these results in the rows of a matrix and compute a basis of its null space.
Though we were able to generate a $\lfloor1.1n \rfloor \times n$ matrix by this method, it was dense and had large integer entries (approximately 6GB of disk space space), and we were not able to finish the null space computation in rational arithmetic due to memory issues caused by coefficient explosion in the intermediate results.

Working modulo a prime $p$ we can avoid coefficient growth and complete the interpolation problem, use Proposition~\ref{prop:up} and express $\Delta_{\Gr(4,8)}$ as a polynomial in the fundamental invariants modulo $p$. Using a server with 32 processors, for small primes, this computation took approximately 6 hours and the resulting expression for $\Delta_{\Gr(4,8)}$ can be stored as a list of coefficients of size approximately 78kb. These are the ancillary files \path{gr48/503} through \path{gr48/9533}.  The ancillary Maple file \path{gr48/input.mw} performs this computation.

We repeated this computation on Auburn's CASIC cluster for over 100 different instances for the first 100 primes above 1000  to produce 100 reductions of the true null vector (each of these computations took approximately 4 hours, but we ran them in parallel). We then used the Chinese remainder theorem in each place to produce an integer vector modulo $N$, with $N$ equal to the product of all but one of the primes. Then we used rational reconstruction in each coordinate to find each unique rational number equivalent to each of our coordinates modulo $N$. The Maple script to reproduce this lifting is at \path{gr48/lifting.mw}.
The final result is a polynomial in the fundamental invariants involving  15,942 terms which can be found in the ancillary file \path{gr48/hd48_100new.txt}.

We checked that the coefficient vector reduces to the same result modulo several other primes. We also checked that for new random points on the discriminant locus that this expression for the discriminant vanishes (without reducing modulo $p$). These probabilistic checks are not a proof. We  certify the rational reconstruction  solution via the following:
\begin{lemma}{{\cite[Lemma~2.4]{steffy2011exact}}}\label{lem:cert}
Suppose $A$ is an $n\times n$  integer matrix, $y$, $b$, are integer vectors, and $d\geq 0$ is an integer. If for some integer $M$
\[
Ay = bd\quad \mod  M \quad\quad\text{and}
\quad \max(d || b||_{\infty}, n || A ||_{\max}||y||_{\infty}) <M/2,
\]
then $Ay = bd$.
\end{lemma}
In particular, if $d$ is the zero vector, then $y$ is a solution to $Ax=0$ if $n || A ||_{\max}||y||_{\infty} <M/2$.
In our case the maximum entry in $A$ is less than $10^{200}$. The lcm of the denominators in $y$ is $<10^{115}$, and $n=15976 <10^{6}$.
The product of the first 100 primes with 4 digits gives $M = 0.2\ldots \times 10^{360}$. Since
\[
 n || A ||_{\max}||y||_{\infty}  < 10^{321}<M/2 = 0.1\ldots \times 10^{360},
\]
Lemma~\ref{lem:cert} applies, and we have a certified result. 
\end{proof}

\begin{remark}
Recall (\ref{sec:2222}) that the choice of a copy of $\CC^2\otimes\CC^2\otimes\CC^2\otimes\CC^2$ in $\bw{4}\CC^8$ induces a projection $\mathfrak{s}\to \mathfrak{s}'$ between semi-simple elements of $\bw{4}\CC^8$ and semi-simple elements of $\CC^2\otimes\CC^2\otimes\CC^2\otimes \CC^2$.
The invariant ring of $\SL(2)^{\times 4} \ltimes \mathfrak{S}_{4}$ is generated in degrees $2,6, 8,12$ \cite{Chterental-Dokovic}. Again, we can compute invariants of degree $d$ by taking the sums of the $d$-th powers of the roots of $\text{E}_{7}$ restricted to this abelian subalgebra $\mathfrak{s}'$. The lists of monomials up to symmetry in the invariants for $\text{E}_{7}$ on semi-simple elements contain exponent vectors of length at most 4. Since the projection is variable substitution, by setting 3 variables to zero the same expressions can be used to find the expressions of invariants for $\mathfrak{S}_{4}\ltimes \SL(2)^{\times 4}$ restricted to $\mathfrak{s}'$.
Using rational interpolation on semi-simple elements and Proposition~\ref{prop:up} to lift the resulting expression, (up to scale) the $2\times 2\times 2\times 2$ hyperdeterminant has the following expression as a polynomial in lower degree invariants:
{\begingroup
\makeatletter
\renewcommand{\maketag@@@}[1]{\hbox{\m@th\normalsize\normalfont#1}}%
\makeatother
\small \begin{align}
\Delta_{2222}=&
-\mfrac{1}{151875}f_{12}^2+\mfrac{4}{421875}f_8^3+\mfrac{496}{455625}f_6^2f_{12}-\mfrac{61504}{1366875}f_6^4
-\mfrac{88}{140625}f_2f_6f_8^2+\mfrac{922}{2278125}f_2^2f_8f_{12}
\nonumber \\&
-\mfrac{672832}{34171875}f_2^2f_6^2f_8
-\mfrac{5128}{759375}f_2^3f_6f_{12}
+\mfrac{5208736}{11390625}f_2^3f_6^3
-\mfrac{178501}{34171875}f_2^4f_8^2
+\mfrac{1865048}{11390625}f_2^5f_6f_8
\nonumber \\&
+\mfrac{3026}{455625}f_2^6f_{12}
-\mfrac{61462384}{34171875}f_2^6f_6^2-\mfrac{1156846}{6834375}f_2^8f_8+\mfrac{6012304}{2278125}f_2^9f_6
-\mfrac{1733509}{1366875}f_2^{12}.
\label{hd2222}
\raisetag{20pt}
\end{align}
\endgroup
}
\hspace{-3pt}Relations between the images of the $\SL(8)$ invariants of $\bw{4}\CC^{8}$ in $(\CC^{2})^{\otimes 4}$, which we still denote by $f_{i}$, were given in  \cite{CDZG}. Our expressions that follow, again obtained by linear interpolation, differ from  \cite{CDZG} because of different re-scalings of the fundamental invariants:
{\small\begin{align*}
f_{10} =&\mfrac{1}{5}\,f_{{2}} \left( 105\,{f_{{2}}}^{4}-119\,f_{{2}}f_{{6}}+27\,f_{{8}} \right),\\
f_{14} =& -\mfrac{6883811}{675}f_2^7+\mfrac{13205038}{1125}f_2^4f_6-\mfrac{4861087}{3375}f_2^3f_8-\mfrac{217448}{135}f_2f_6^2+\mfrac{15407}{225}f_2f_{12}+\mfrac{9548}{75}f_6f_8,\\
f_{18}=&-\mfrac{172149469}{2025}f_2^9+\mfrac{291940571}{3375}f_2^6f_6-\mfrac{99031064}{10125}f_2^5f_8-\mfrac{200876924}{50625}f_2^3f_6^2+\mfrac{45023}{135}f_2^3f_{12}
 \\ &
-\mfrac{1311188}{1875}f_2^2f_6f_8+\mfrac{42699}{625}f_2f_8^2-\mfrac{240176}{225}f_6^3+\mfrac{1204}{25}f_6f_{12}.
\end{align*}
}
\end{remark}

\section{Evaluation of the $\Gr(3,9)$ and $\Gr(4,8)$ discriminants} \label{sec:gr39gr48}
The expressions for $\Delta_{\Gr(3,9)}$ and $\Delta_{\Gr(4,8)}$ are dauntingly huge, and unknown.
However, if one has the Jordan decomposition on tensors in these spaces, one may use our formulas on semi-simple elements to evaluate the discriminants. In this section we describe how to evaluate the discriminants without appealing to Jordan decomposition.

\subsection{Warm-up with $\Gr(3,6)$} We return to Example~\ref{ex:gr36}. 
The following Katanova-type construction computes the invariants for $\Gr(3,6)$ and related varieties. Let $V \cong \CC^6$.
Consider the natural linear maps associated to a tensor $T\in \bw{3}V$ given by multiplication by $T$:
\[
\bw{1}V\to \bw{4}V\quad\text{and}\quad \bw{2} V \to \bw{5}V.
\]
Via isomorphisms induced by a volume form $\Omega:\bw{4}V\to \bw{2}V$ and $\Omega:\bw{5}V\to \bw{1}V$ we obtain the linear map:
\[
A:=\Omega T\Omega T \colon \bw{1}V \to \bw{1}V.
\]
One checks that  $\tr(A)=0$.  The invariant ring is generated in degree 4, and $f = \tr(A^{2})$, is a non-zero invariant polynomial of degree 4, hence the generator we're looking for.
Given a decomposition into a sum of 2-dimensional vector spaces $U_{1}\oplus U_{2}\oplus U_{3}= V$, we obtain a decomposition of $\bw{3}V$ which contains $U_{1}\otimes U_{2}\otimes U_{3}$. If we further identify $U_{i} = U$ for all $i$ we obtain a copy of $S^{3}U$ in $U^{\otimes 3}$.
We have the following projections to subspaces: $\bw{3} V \to U_{1}\otimes U_{2}\otimes U_{3} \to  S^{3}(U)$ which we may use to obtain the basic invariants for the Segre and Veronese cases from the Grassmann case via specialization.

\subsection{Evaluating the $\Gr(3,9)$-discriminant}
An expression for the $3\times 3\times 3$ hyperdeterminant in fundamental invariants was given in \cite{BremnerHuOeding}. Here we will describe our methods for evaluating this invariant using the methods of this article.
Katanova \cite{Katanova} gave explicit expressions of the fundamental invariants in this case. She constructs an $84\times 84$ matrix $C$ with entries that are cubic  $\bw{3}\CC^{9}$. Katanova's formula \cite[eq.~(9)]{Katanova} is for a tensor $T \in \bw{3}V$:
\[
(C(T))_{\{i\}}^{mnp} = \varepsilon_{\{ l\}\{ i\}j( k)} T^{\{l\}}(T^{m(k)}T^{jnp} +T^{n(k)}T^{mjp} + T^{p(k)}T^{mnj}  )
,\]
where $\varepsilon$ is a 9-dimensional volume form, and she uses Einstein summation convention,  denoting triple indices by $\{l\} = l_1l_2l_3$ and double indices by $(k) = k_1k_2$. 
The matrix $C(T)$ is best constructed using sparse arrays on software that is optimized for computations with such data structures, like Mathematica. However, once $C$ is constructed,  it is efficient to evaluate $C(T)$ and the powers of its traces on a particular tensor $T \in \bw{3}\CC^{9}$. Write $C$ for the matrix that is constructed in the same way using variables in place of the entries of a tensor $T$. 
The power-traces $f_{3n} = \tr(C^{n})$ are the fundamental invariants for $n=4,6,8,10$.  
\subsection{Evaluating the $\Gr(4,8)$-discriminant}
Again, Katanova gave explicit expressions for the fundamental invariants as traces of powers of a special matrix. In this case we can give a simple construction of her matrix without relying on a sum over a large index set.

A generic tensor $T \in \bw{4}\CC^{8}$ defines mappings by concatenation $\wedge T\colon \bw{2}\CC^{8}\to \bw{6}\CC^{8}$ and by contraction $\lrcorner T\colon \bw{6}\CC^{8}\to \bw{2}\CC^{8}$. Their composition is $A\colon \bw{2}\CC^{8}\to \bw{2}\CC^{8}$, a matrix whose entries are quadratic in the entries of $T$. One checks that the traces of the powers of appropriate degrees produce the fundamental invariants:
\[
f_{2n} = \tr(A^{n}),\quad \text{for} \quad n = 1, 3, 4,5,6,7,9.
\]
This can be done in a few lines in Macaulay2, for instance.
\begin{verbatim}
mysubsets = (a,b) -> apply(subsets(a,b), xx-> xx+apply(b, i-> 1));
R = QQ[e_1..e_8,SkewCommutative=> true];
S = QQ[apply(mysubsets(8,4), i-> x_i)];
RS = R**S;
myRules = sum(mysubsets(8,4),  I -> e_(I_0)*e_(I_1)*e_(I_2)*e_(I_3) *x_I);
b2 = sub(basis(2,R),RS);  b6 = sub(basis(6,R),RS);  b8 = sub(basis(8,R),RS);
A1 = diff(diff((transpose b2),b6), myRules);
A2 = diff(transpose diff(diff((transpose b2),b6),b8), myRules);
A =sub( A2*A1,S);  f_2 = trace A;   time f_6 = trace A^3;
\end{verbatim}
While the symbolic traces are unlikely to finish in a reasonable amount of time, evaluating $A$ first at a specific tensor will allow the computations to finish. For example, in Macaulay2 one may evaluate the invariants on the tensor $p_{1234}+p_{5678}$ by continuing with the following:
\begin{verbatim}
use S; myA = sub(sub(A, {x_{1,2,3,4}=>1, x_{5,6,7,8}=>1 }),QQ);
for i in {1,3,4,5,6,7,9} do  t_(2*i) = trace(myA^i);
\end{verbatim}
In our computation of  $\Delta_{\Gr(4,8)}$ we used expressions of fundamental invariants $f_{i}$ (gotten by taking traces of sums of powers of roots) that were normalized so that the coefficients were integers with no common factors. So once one calibrates the choices for the basic fundamental invariants so that they match these conventions, values for the fundamental invariants can be substituted into the expression \eqref{eq:del48} to obtain the value of the discriminant at that point.

\section{Acknowledgements}
We thank the referees of this work whose suggestions improved this work.
We thank the developers of Maple and Macaulay2, where most of our computations were done and checked.
We thank Nathan Ilten, Doug Leonard, and Steven Sam for helpful conversations.
Auburn University supplied computing resources.
Oeding is grateful to UTBM and IMPAN for the support provided during which most of this work was carried out.

This work was partially supported by the grant 346300 for IMPAN from the Simons Foundation and the matching 2015-2019 Polish MNiSW fund as well as by the French ``Investissements d'Avenir'' programme, project ISITE-BFC (contract ANR-15-IDEX-03).
\newcommand{\arxiv}[1]{}
\bibliographystyle{amsalpha}
\bibliography{/Users/lao0004/Dropbox/BibTeX_bib_files/main_bibfile.bib}

\end{document}